\documentclass[12pt,reqno]{amsart}
\usepackage{graphicx}
\usepackage{amssymb,amsmath}
\usepackage{amsthm}
\usepackage{color,graphicx}
\usepackage{hyperref}
\usepackage{color}
\usepackage{epstopdf}

\setlength{\textwidth}{15cm} \setlength{\textheight}{22 cm}
\addtolength{\oddsidemargin}{-1.5cm} \addtolength{\evensidemargin}{-1.5cm}

\newcommand{\be}{\begin{equation}}

\newcommand{\ee}{\end{equation}}

\newtheorem{obs}{Remark}[section]

\newcommand{\sn}{{\rm \,sn}}
\newcommand{\dn}{{\rm \,dn}}

\newcommand{\R}{{\mathbb R}}

\newcommand{\ve}{{\varepsilon}}


\numberwithin{equation}{section}
\numberwithin{figure}{section}

\newtheorem{theorem}{Theorem}[section]
\newtheorem{proposition}[theorem]{Proposition}
\newtheorem{remark}[theorem]{Remark}
\newtheorem{lemma}[theorem]{Lemma}
\newtheorem{corollary}[theorem]{Corollary}
\newtheorem{definition}[theorem]{Definition}


\begin{document}
\vglue-1cm \hskip1cm
\title[Periodic Waves for a dispersive equation]{Periodic Traveling-wave solutions for regularized dispersive equations: Sufficient conditions for orbital stability with applications}

\begin{center}

\subjclass[2000]{76B25, 35Q51, 35Q53.}

\keywords{Orbital stability, regularized dispersive equation, periodic waves}

\maketitle

{\bf Fabr\'icio Crist\'ofani }

{IMECC-UNICAMP\\
	Rua S\'ergio Buarque de Holanda, 651, CEP 13083-859, Campinas, SP,
	Brazil.}\\
{ fabriciocristofani@gmail.com}

\vspace{3mm}

{\bf F\'abio Natali}

{Departamento de Matem\'atica - Universidade Estadual de Maring\'a\\
	Avenida Colombo, 5790, CEP 87020-900, Maring\'a, PR, Brazil.}\\
{fmanatali@uem.br}

\vspace{3mm}

{\bf Ademir Pastor}

{IMECC-UNICAMP\\
	Rua S\'ergio Buarque de Holanda, 651, CEP 13083-859, Campinas, SP,
	Brazil.}\\
{apastor@ime.unicamp.br}

\end{center}

\begin{abstract}
In this paper, we establish a new criterion for the orbital stability of
periodic waves related to a general class of regularized dispersive equations. More specifically, we present sufficient conditions for the stability without knowing the positiveness of the associated hessian matrix. As application of our method, we show the orbital stability for the fifth-order model. The orbital stability of periodic waves resulting from a minimization of a convenient functional is also proved.
\end{abstract}

\section{Introduction}

We present sufficient conditions for the orbital stability of periodic traveling-wave solutions associated to the regularized dispersive model
\be\label{rDE}
u_t+u_x+uu_x+(\mathcal{M}u)_t=0,
\ee
where $u:\R\times\R\to\R$ is a real spatially $L$-periodic function. Here $\mathcal{M}$ is a differential or pseudo-differential operator in the periodic setting and it is defined as a Fourier multiplier  by 
\begin{equation*}
\widehat{\mathcal{M}g}(\kappa)=\theta(\kappa)\widehat{g}(\kappa), \quad \kappa\in\mathbb{Z}.
\end{equation*}
The symbol $\theta$ is assumed to be even and continuous on $\R$ satisfying
\begin{equation}\label{A1A2}
\upsilon_1|\kappa|^{m_1}\leq\theta(\kappa)\leq \upsilon_2|\kappa|^{m_1}, \quad m_1>1/3,
\end{equation}
for all $\kappa\in\mathbb{Z}$ and for some $\upsilon_i>0$, $i=1,2$.

Regularized equations appear as alternative models to describe the propagation of nonlinear waves in several physical contexts. Indeed, if $\mathcal{M}=-\partial_x^2$, equation $(\ref{rDE})$  reduces to the so called BBM equation,
\begin{equation}\label{kdv1}
u_t+u_x+uu_x-u_{xxt}=0,
\end{equation}
which was originally derived by Benjamin-Bona-Mahony \cite{bbm} 
 as an alternative model to
 the well known Korteweg-dr Vries  equation for small-amplitude, long wavelength surface
 water waves.  Also,
if  $\mathcal{M}=\mathcal{H}\partial_x$, equation $(\ref{rDE})$ reduces to the regularized Benjamin-Ono equation 
\begin{equation}\label{BO1}
u_t+u_x+uu_x+\mathcal{H}\partial_x u_t=0,
\end{equation}
where $\mathcal{H}$ indicates the Hilbert transform defined via its Fourier transform as
$$\widehat{\mathcal{H}f}(\kappa)=-i\mbox{sgn}(\kappa)\widehat{f}(\kappa),\ \ \ \ \ \ \ \ \ \kappa\in\mathbb{Z}.$$
Equation \eqref{BO1} models  the evolution of long-crested waves
at the interface between two immiscible fluids. It also appears in the two-layer system 
created by the inflow of fresh water from a river into the sea (see \cite{kalisch} and references therein). For the orbital stability of periodic traveling waves for \eqref{BO1} we refer the reader to \cite{ASB}.

Formally, equation $(\ref{rDE})$ admits the conserved quantities
\begin{equation}\label{Eu}
	P(u)=\frac{1}{2}\int_{0}^{L}\Big(u\mathcal{M}u-\frac{1}{3}u^3\Big)dx,
\end{equation}
\begin{equation}\label{Fu}
	F(u)=\frac{1}{2}\int_{0}^{L}\left(u\mathcal{M}u+u^2\right)dx,
\end{equation}
and 
\begin{equation}\label{Mu}
	M(u)=\int_{0}^{L}u\,dx.
\end{equation}

A traveling wave solution for \eqref{rDE} is a solution of the form $u(x,t)=\phi(x-\omega t)$, where $\omega$ is a real constant representing the wave speed and $\phi:\R\to\R$ is a periodic function. Substituting this form into (\ref{rDE}), we obtain
\begin{equation}\label{ode-wave}
	\omega \mathcal{M}\phi+(\omega-1)\phi-\frac{1}{2}\phi^2+A=0,
\end{equation}
where $A$ is a constant of integration.

In view of the conserved quantities (\ref{Eu})-(\ref{Mu}) we may define the augmented Lyapunov functional,
\begin{equation}\label{lyafun}
	G(u)=P(u)+(\omega-1) F(u)+AM(u),
\end{equation}
and the linearized operator around the wave $\phi_{(\omega,A)}$,
\begin{equation}\label{operator}
	\mathcal{L}:=G''(\phi)=\omega\mathcal{M}+(\omega-1)-\phi.
\end{equation}
Note in particular that $G'(\phi)=0$.
Thus, it is expected that the functional $G$ defined in (\ref{lyafun}) plays a crucial role in order to guarantee the orbital stability.

Let us connect our work with the current literature. First of all, since the operator $\mathcal{M}$ satisfies the general relation \eqref{A1A2}, we are able  to address in a unified way a large number of dispersive models. However, our main motivation come from the results for the generalized BBM equation. Indeed, based on the work \cite{johnson}, the author in \cite{johnsonBBM} established sufficient conditions for the modulational/orbital stability of periodic waves related to the generalized BBM equation
\begin{equation*}
u_t+u_x+u^pu_x-u_{xxt}=0,
\end{equation*}
where $p\geq1$ is an integer. In particular, if $1\leq p<4$, it was showed that the periodic waves in the solitary wave limit are stable (modulationally and nonlinearly). On the other hand, if $p>4$, the instability was established provided the corresponding wave speed $\omega$ is greater than a critical speed $\omega(p)>1$. To this end, the author has
constructed smooth periodic waves $\phi(\cdot,A,B,\omega)$, where the period $L$ depends smoothly on the triple $(A,B,\omega)\in\Omega\subset\mathbb{R}^3$. Here $B$ is the integration constant which appears in the quadrature form associated with the second order differential equation  $(\ref{ode-wave})$ with $\mathcal{M}=-\partial_x^2$. So, by assuming that the signal of the Jacobian matrices $L_{B}$, $\{L,M\}_{A,B}$ and $\{L,M,F\}_{A,B,\omega}$ are positive at the point $(A_0,B_0,\omega_0)\in\Omega\subset\mathbb{R}^3$, one has the orbital stability of the   waves $\phi(\cdot,A_0,B_0,\omega_0)$. Here,  
$$
\{f_1,\ldots, f_n\}_{x_1,\ldots,x_n}=\det 
\begin{pmatrix}
\dfrac{\partial(f_1,\ldots,f_n)}{\partial(x_1,\ldots,x_n)}
\end{pmatrix}.
$$
In the case $p=1,2,4$, the reader will also find some results in  \cite{ASB}, \cite{ASB1}, where the authors studied the orbital stability of some explicit solutions.

If $\mathcal{M}$ is the fractional derivative operator  $\mathcal{M}=(\sqrt{-\partial_x^2})^{\alpha}$, $1/3<\alpha\leq2$, in Fourier sense (the cases $\mathcal{M}=-\partial_x^2$ and $\mathcal{M}=\mathcal{H}\partial_x$ are included in that approach), in  \cite{hur}, the authors established the existence of minimizers for the energy functional. In addition, it has been proved that the local minimizers are orbitally stable provided that the determinant $\{F,M\}_{A,\omega}$ is assumed to be non-zero.

Our main goal in this paper is to establish a new criterion for the orbital stability where it is not necessary to know the positiveness of the associated Hessian matrix neither the Jacobians as determined in \cite{ABS}, \cite{hur} and \cite{johnsonBBM}. To do so, instead of considering $G$ as a Lyapunov function, based on the works \cite{ANP}, \cite{NP1} and \cite{Stuart}, we consider the new functional given by 
$$V(u)=G(u)-G(\phi)+N(Q(u)-Q(\phi))^2,$$ 
where $N$ is a positive constant to be determined later and $Q(u):=x_0F(u)+y_0M(u)$ with $x_0,y_0\neq0$ real constants also to be determined properly. This new functional removes the assumption of the mentioned positiveness in the stability theorem.

Next, we present a brief outline of our work. We will assume the following assumption:\\ 

\begin{enumerate}
	\item[\textit{(H)}] Assume $m_1>1/3$. Let $(\omega_0,A_0)\in\mathbb{R}\backslash\{0\}\times\R$ be fixed. Suppose that $\phi:=\phi_{(\omega_0,A_0)}\in X:=H_{per}^\frac{m_1}{2}([0,L_0])$ is an even periodic solution of (\ref{ode-wave}) in the sense of distributions with fixed period $L_0>0$. Moreover, assume the self-adjoint operator 
	\begin{equation}\label{L0}
	\mathcal{L}_0:=\mathcal{L}_{(\omega_0,A_0)}=\omega_0\mathcal{M}+(\omega_0-1)-\phi_{(\omega_0,A_0)}
\end{equation}
 has only one negative eigenvalue which is simple and zero is a simple eigenvalue whose eigenfunction is $\phi'$.
\end{enumerate}
\vspace{0.5cm}

Here and throughout the paper, $H_{per}^s([0,L_0])$ stands for the periodic Sobolev space of order $s\in\mathbb{R}$. When $s\geq0$, $H_{per,e}^s([0,L_0])$ indicates the subspace of $H_{per}^s([0,L_0])$ constituted by the even periodic functions.

The spectral properties of operator $\mathcal{L}_0$ in  assumption $(H)$ are crucial to obtain our results. In general, such properties are not easily obtained and one needs to work with the structure of the equation in hand to obtain them. However, there are some theories in the literature where we may get $(H)$. Indeed,
in many situations when $\mathcal{M}$ is a second order differential operator and $\phi$ is given in terms of the Jacobian elliptic functions, $\mathcal{L}_0$ turns out to be a Hill's operator with a Lam\'e type potential (see \cite{Magnus}). In particular, studying the spectrum of $\mathcal{L}_0$ is equivalent to studying the eigenvalue problem 
\begin{equation}\label{specproblem}
	\left\{\begin{array}{l}
		h''(x)+\left[\lambda-n(n+1)\cdot k^2 \sn^2\left(x,k\right)\right]h(x)=0,\\
		h(0)=h(2K(k)), \quad h'(0)=h'( 2K(k)),
	\end{array}\right.
\end{equation}
where $\lambda$ is a real parameter and $n$ is a non-negative integer. Depending on $n$, the first eigenvalues of \eqref{specproblem} are well known (see e.g., \cite{ince}). Many applications using this approach have appeared in the literature (see e.g., \cite{angulo4} and references therein).
Another approach to obtain $(H)$ was given in \cite{Neves1}.  Assume that $\mathcal{M}$ is a second order differential operator. Recall from Floquet's theorem (see e.g., \cite{Magnus} page 4) that if $y$ is any solution of $\mathcal{L}_0 y=0$, linearly independent of $\phi'$, then there exists a constant $\theta$ satisfying 
\begin{equation}\label{thetadef}
	y(x+L)=y(x)+\theta \phi'(x).
\end{equation}
In particular, if $y$ satisfies the initial condition  $y'(0)=0$ then
by taking the derivative with respect to $x$ in both sides of \eqref{thetadef} and evaluating the result at $x=0$, we see that
\begin{equation}\label{thetadef1}
	\theta=\dfrac{y'(L)}{\phi''(0)}.
\end{equation}
Under these conditions Theorem 3.1 in \cite{Neves1} (see also \cite{NN}) states that the second eigenvalue of $\mathcal{L}_0$ is simple if and only if $\theta\neq0$; in addition, it is zero if and only if $\theta<0$. 
Finally, let us recall the approach given in \cite{AN}, which is based on the total positivity theory (see \cite{kar}) and can be applied to  local or nonlocal operators.  To give the precise statement of the result, we recall that a sequence $\{\alpha_n\}_{n\in\mathbb{Z}}$ of real numbers is said to be in the class $PF(2)$ discrete if
\begin{itemize}
	\item[(i)] $\alpha_n>0$, for all $n\in\mathbb{Z}$;
	\item[(ii)] $\alpha_{n_1-m_1}\alpha_{n_2-m_2}-\alpha_{n_1-m_2}\alpha_{n_2-m_1}>0$, for $n_1<n_2$ and $m_1<m_2$.
\end{itemize}
Theorem 4.1 in \cite{AN} states if $\phi$ is positive, even, and  $\widehat{\phi}>0$ and $\widehat{\phi^2}$ belong to the class $PF(2)$ discrete, then $\mathcal{L}_0$ satisfies $(H)$ (see also Section \ref{applic} below). Here $\widehat{f}$ stands for the Fourier transform of $f$.

With hypothesis $(H)$ in hand, we are enabled to construct a smooth surface
\begin{equation*}
(\omega,A)\in \mathcal{O}\mapsto \phi_{(\omega,A)} \in H_{per,e}^n([0,L_0]), \quad  n\in\mathbb{N},
\end{equation*}
 of periodic solutions for  $(\ref{ode-wave})$, with a fixed period $L_0$. This means that for any
 $(\omega,A)$ in the open neighborhood $\mathcal{O}$ of $(\omega_0,A_0)$, $\phi_{(\omega,A)}$ is a solution of \eqref{ode-wave} with period $L_0$. In addition, assumption $(H)$ is also suitable to obtain the non-positive spectrum
 of the linearized operator $\mathcal{L}_{(\omega,A)}$ in $(\ref{operator})$, since one has the convergence $\mathcal{L}_{(\omega,A)}\rightarrow\mathcal{L}_{(\omega_0,A_0)}$ in the sense of Kato (see detailed arguments in \cite{kato1}). As a consequence, we may prove the orbital stability of periodic waves without knowing the behavior of the Hessian matrix associated to the function $(\omega,A)\mapsto G(\phi_{(\omega,A)})$, as required in \cite{AN}, \cite{ASB}, \cite{grillakis1}, \cite{hur}, \cite{johnsonBBM}, \cite{NN}, and related references. 
Instead, assuming that assumption $(H)$ occurs and  $\omega_0-1-2A_0\neq0$, our orbital stability criterion is based in proving that the quantity $s(\phi)$ defined as

\begin{equation}
\label{sphi123}
\begin{split}
s(\phi)&=\displaystyle\left(2\omega_0(\omega_0-1)+2A_0+1\right)M(\phi)+\omega_0\int_{0}^{L_0}\phi \mathcal{M}\phi dx\\
&\quad+\displaystyle\left(2A_0(\omega_0+1)-\omega_0+1\right)L_0.
\end{split}
\end{equation}
is strictly positive. In fact, we have the following result
\begin{theorem}\label{coro14}
	Assume that assumption $(H)$ holds and let $s(\phi)$ be defined as in \ref{sphi123}. If $\omega_0-1-2A_0\neq0$ and $s(\phi)>0$, then the periodic wave $\phi$ is orbitally stable in $X$.
\end{theorem}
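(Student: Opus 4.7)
The plan is to deploy the modified Lyapunov functional
\[
V(u)=G(u)-G(\phi)+N\bigl(Q(u)-Q(\phi)\bigr)^2, \qquad Q(u)=x_0 F(u)+y_0 M(u),
\]
with $x_0,y_0$ and $N>0$ to be chosen so that $V$ becomes a genuine Lyapunov function controlling the orbit of $\phi$. This strategy, borrowed from \cite{ANP, NP1, Stuart} and outlined in the introduction, bypasses the need for positivity of any Hessian matrix. First I would use hypothesis $(H)$ together with the implicit function theorem applied to $(\omega,A,\phi)\mapsto G'_{(\omega,A)}(\phi)$ to extract a smooth branch $(\omega,A)\mapsto \phi_{(\omega,A)}$ of $L_0$-periodic solutions of \eqref{ode-wave} on a neighborhood of $(\omega_0,A_0)$. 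Differentiating \eqref{ode-wave} in $\omega$ and in $A$ yields the key identities
\[
\mathcal{L}_0\partial_\omega\phi=-(\mathcal{M}\phi+\phi)=-F'(\phi), \qquad \mathcal{L}_0\partial_A\phi=-1=-M'(\phi).
\]
Because $F$, $M$, $G$ are all conserved, so is $V$, and $V(\phi)=0$. A direct computation gives $V''(\phi)=\mathcal{L}_0+2N\,q\otimes q$, where $q:=Q'(\phi)=x_0(\mathcal{M}\phi+\phi)+y_0$; evenness of $\phi$ makes $q$ even, so it is automatically orthogonal to $\phi'\in\ker\mathcal{L}_0$.

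Next, a classical rank-one perturbation argument that exploits $(H)$ (one simple negative eigenvalue, one-dimensional kernel spanned by $\phi'$) shows that $V''(\phi)$ is positive definite on $\{\phi'\}^\perp$ for $N$ sufficiently large, provided $(x_0,y_0)$ is chosen so that $\langle \mathcal{L}_0^{-1}q,q\rangle<0$. Inverting $\mathcal{L}_0$ via the identities just obtained gives $\mathcal{L}_0^{-1}q=-(x_0\partial_\omega\phi+y_0\partial_A\phi)$, and using the Clairaut symmetry $\partial_\omega M=\partial_A F$ (a consequence of $F$ and $M$ being the $(\omega,A)$-gradients of $d(\omega,A):=G(\phi_{(\omega,A)})$), the spectral condition turns into the purely algebraic requirement
\[
I(x_0,y_0):=x_0^2\,\partial_\omega F(\phi)+2\,x_0 y_0\,\partial_\omega M(\phi)+y_0^2\,\partial_A M(\phi)>0.
\]

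The crux, and the main obstacle, is to connect this abstract inequality to the explicit quantity $s(\phi)$ in \eqref{sphi123}. I would produce a specific choice of $(x_0,y_0)$, depending algebraically on $\omega_0$ and $A_0$ and well-defined exactly when $\omega_0-1-2A_0\neq 0$, for which $I(x_0,y_0)$ reduces to a strictly positive multiple of $s(\phi)$. The computation relies on the auxiliary identities $\mathcal{L}_0\phi=-\tfrac12\phi^2-A_0$ and $\mathcal{L}_0\,1=(\omega_0-1)-\phi$ (both follow from \eqref{ode-wave} together with $\mathcal{M}(\mathrm{const})=0$, which is forced by \eqref{A1A2}), the integrated forms of \eqref{ode-wave} obtained by testing against $1$ and $\phi$, and self-adjointness of $\mathcal{M}$; the hypothesis $\omega_0-1-2A_0\neq 0$ is precisely what keeps the linear system determining $(x_0,y_0)$ non-degenerate. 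Once $I(x_0,y_0)>0$ is established, $V''(\phi)$ is coercive on $\{\phi'\}^\perp$ and orbital stability follows from the standard Benjamin--Weinstein-type argument: for $u$ close to the orbit of $\phi$, pick the translation $r^*$ that minimizes $\|u-\phi(\cdot+r)\|_X$, apply the coercivity estimate $V(u)\geq c\,\mathrm{dist}_X\bigl(u,\{\phi(\cdot+r):r\in\R\}\bigr)^2$, and combine it with conservation $V(u(t))=V(u(0))$ to close the argument.
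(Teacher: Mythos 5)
Your plan follows the paper's own route essentially step for step: the augmented functional $V(u)=G(u)-G(\phi)+N(Q(u)-Q(\phi))^2$, the smooth surface $(\omega,A)\mapsto\phi_{(\omega,A)}$ from the implicit function theorem, the identities $\mathcal{L}_0\eta=-(\mathcal{M}\phi+\phi)$ and $\mathcal{L}_0\beta=-1$, the reduction of coercivity of $V''(\phi)$ on $\{\phi'\}^\perp$ to the sign condition $\langle\mathcal{L}_0^{-1}Q'(\phi),Q'(\phi)\rangle<0$ (the paper phrases this as $\mathcal{I}=\langle\mathcal{L}_0\Phi,\Phi\rangle<0$ with $\Phi=x_0\eta+y_0\beta$ in Theorem \ref{teoest} and Proposition \ref{propKpos}), the symmetry $F_A=M_\omega$, and the closing Lyapunov/modulation argument. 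Your Clairaut-type derivation of $F_A=M_\omega$ from $d(\omega,A)=G(\phi_{(\omega,A)})$ is a slightly cleaner route than the paper's manipulation of the integrated identities, and is correct since $G'(\phi_{(\omega,A)})=0$ along the surface.

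The one place where the proposal does not actually prove the theorem is exactly the step you yourself flag as ``the crux'': showing that some admissible $(x_0,y_0)$ makes $\Delta(x_0,y_0)=x_0^2F_\omega+2x_0y_0M_\omega+y_0^2M_A$ a positive multiple of $s(\phi)$. This is not a routine bookkeeping step that can be waved at; it is the entire content of Corollary \ref{coro6789}, and it is where the precise coefficients in \eqref{sphi123} and the hypothesis $\omega_0-1-2A_0\neq0$ come from. Concretely, one must test the $\omega$- and $A$-derivatives of \eqref{ode-wave} against $1$ and against $\phi$, combine with \eqref{ode-wave} itself to eliminate the cubic terms $\partial_\omega\int\phi^3$ and $\partial_A\int\phi^3$, and arrive at the two scalar relations $M(\phi)+2F(\phi)+(\omega_0-1-2A_0)M_\omega=\omega_0F_\omega$ and $L_0+M(\phi)+(\omega_0-1-2A_0)M_A=\omega_0M_\omega$; only then does the specific choice $x_0=-\omega_0y_0/(\omega_0-1-2A_0)$ collapse $\Delta(x_0,y_0)$ to $y_0^2\,s(\phi)/(\omega_0-1-2A_0)^2$, killing the unknown derivative $M_\omega$ entirely. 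Without carrying out this elimination you have proved stability under the abstract hypothesis $\Delta(x_0,y_0)>0$ for some $(x_0,y_0)$, but not under the stated hypothesis $s(\phi)>0$. Everything else in your outline is sound and matches the paper.
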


 In order to prove Theorem $\ref{coro14}$, we employ the recent developments in \cite{CNP} and \cite{NP1}, which are extensions of the approaches in  \cite{bona2}, \cite{grillakis1}, and \cite{johnson}  adapted to the periodic case.

 Our paper is organized as follows. In next section we present the existence of periodic waves related to  equation $(\ref{ode-wave})$, the behaviour of the non-positive spectrum of $\mathcal{L}$, and the orbital stability theory of periodic waves. The sufficient condition for the orbital stability of periodic waves is presented in Section \ref{suffstab}. Finally,  Section \ref{applic} is devoted to some applications of our theory.

\section{Orbital Stability of Periodic Waves}\label{OSPW}

In this section, we present our stability result. The main result of the section is Theorem \ref{teoest} which gives a criterion for the orbital stability.
Before stating the result itself, we need some preliminary tools. For functions $u$ and $v$ in $X:=H_{per}^\frac{m_1}{2}([0,L_0])$ we   let $\rho$ be the ``distance'' between $u$ and $v$  defined  by
\begin{equation*}
	\rho(u,v)=\inf_{y\in\mathbb{R}}||u-v(\cdot+y)||_{X}.
\end{equation*}
Roughly speaking the distance between $u$ and $v$ is measured through the distance between $u$ and the orbit of $v$, generated by translations.

Throughout this section we let $\phi:=\phi_{(\omega_0,A_0)}\in X$ be the periodic wave given in $(H)$.
Our precise definition of orbital stability is given below.
\begin{definition}\label{defstab}
	We say that an $L_0$-periodic solution $\phi$ is orbitally stable in $X$, by the periodic flow of \eqref{rDE},  if for any $\ve>0$ there exists $\delta>0$ such that for any $u_0\in X$ satisfying $\|u_0-\phi\|_X<\delta$, the solution $u(t)$ of \eqref{rDE} with initial data $u_0$ exists globally and satisfies
	$$
	\rho(u(t),\phi)<\ve,
	$$
	for all $t\geq0$.
\end{definition}

\begin{remark}
	The notion of orbital stability prescribes the existence of global solutions. Since questions of (local and) global well-posedness is out of the scope of this paper, we will assume  the periodic Cauchy problem associated with (\ref{rDE}), namely, 
	\begin{equation}\nonumber
		\left\{\begin{array}{llllll}
			u_t+u_x+uu_x+(\mathcal{M}u)_t=0, \\
			u(x,0)=u_0(x), \ \ \ x\in[0,L].
		\end{array}\right.
	\end{equation}
	is globally well-posed in $X$.
\end{remark}

For a given $\varepsilon>0$, we define the $\varepsilon$-neighborhood of the orbit $O_\phi=\{\phi(\cdot+y), y\in\R\}$ as
\begin{equation*}
	U_{\varepsilon} := \{u\in X;\ \rho(u,\phi) < \varepsilon\}.
\end{equation*}
In what follows, we set  
\begin{equation*}
\Upsilon_0=\{u\in X;\ \langle Q'(\phi),u\rangle=0\},
\end{equation*}
where $\langle\cdot,\cdot\rangle$ denotes the scalar product in $L^2_{per}([0,L_0])$.
Note that $\Upsilon_0$ is nothing but the tangent space to $\{u\in X; Q(u)=Q(\phi)\}$ at $\phi$. With these notations, our main theorem reads as follows.

\begin{theorem}\label{teoest} Suppose that assumption (H) holds. Moreover, for $\mathcal{L}_0$ defined in \eqref{L0}, assume the existence of $\Phi\in X$ such that $\langle\mathcal{L}_0\Phi,\varphi\rangle=0$, for all $\varphi\in \Upsilon_0$, and $\mathcal{I}=\langle\mathcal{L}_0\Phi,\Phi\rangle<0$, then $\phi$ is orbitally stable in $X$ by the periodic flow of   $(\ref{rDE})$.
\end{theorem}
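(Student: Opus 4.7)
The plan is to adapt the augmented Lyapunov approach of \cite{CNP,NP1} with the functional
\[
V(u) = G(u) - G(\phi) + N\bigl(Q(u)-Q(\phi)\bigr)^2,
\]
where $N>0$ is chosen below and $Q(u)=x_0F(u)+y_0M(u)$ is the conserved quantity from the Introduction. Since $P$, $F$, $M$ are conserved by the flow of (\ref{rDE}), so is $V$; note also that $G$ and $Q$ are translation invariant, so $V$ depends on the orbit of $\phi$ rather than on $\phi$ itself. Using $G'(\phi)=0$ (which is (\ref{ode-wave})), $G''(\phi)=\mathcal{L}_0$, and the fact that $G$ is a cubic polynomial in $u$ while $Q$ is quadratic, a Taylor expansion yields
\[
V(\phi+v) = \tfrac12\langle\mathcal{L}_0 v,v\rangle + N\langle Q'(\phi),v\rangle^2 + R(v),
\]
with $|R(v)|\leq C\|v\|_X^3$ for $\|v\|_X$ small, the cubic error $-\tfrac16\int v^3$ and the cross terms from squaring $Q(\phi+v)-Q(\phi)$ being controlled by (\ref{A1A2}) and a Sobolev embedding.

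The heart of the proof is the coercivity of the quadratic part on the subspace tangent to the orbit: for $N$ sufficiently large there is $\beta>0$ such that
\[
\mathcal{W}(v) := \langle\mathcal{L}_0 v,v\rangle + 2N\langle Q'(\phi),v\rangle^2 \geq \beta\|v\|_X^2, \qquad v\in\{\phi'\}^{\perp}.
\]
To prove this, I first observe that the hypothesis $\langle\mathcal{L}_0\Phi,\varphi\rangle=0$ for all $\varphi\in\Upsilon_0$, combined with $\Upsilon_0$ having codimension one, forces $\mathcal{L}_0\Phi=\mu Q'(\phi)$ for some $\mu\in\mathbb{R}$; the assumption $\mathcal{I}<0$ then forces $\mu\neq 0$ and $\langle Q'(\phi),\Phi\rangle\neq 0$. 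After replacing $\Phi$ by $\Phi-c\phi'$ (which alters neither $\mathcal{L}_0\Phi$ nor $\mathcal{I}$) I may take $\Phi\perp\phi'$ in $L^2$. Then every $v\in\{\phi'\}^{\perp}$ admits a unique decomposition $v=\alpha\Phi+w$ with $w\in\Upsilon_0\cap\{\phi'\}^{\perp}$, and a direct expansion (exploiting $\langle\mathcal{L}_0\Phi,w\rangle=0$) gives
\[
\mathcal{W}(v) = \alpha^2\bigl(\mathcal{I}+2N\langle Q'(\phi),\Phi\rangle^2\bigr) + \langle\mathcal{L}_0 w,w\rangle.
\]
The first coefficient is positive once $N > -\mathcal{I}/\bigl(2\langle Q'(\phi),\Phi\rangle^2\bigr)$, which is possible precisely because $\mathcal{I}<0$. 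For the second term I would invoke the constrained spectral lemma of \cite{CNP,NP1} (a periodic version of the Weinstein/Hestenes--Lions criterion): together with $(H)$, the inequality $\mathcal{I}<0$ is equivalent to $\langle\mathcal{L}_0 w,w\rangle\geq 0$ on $\Upsilon_0\cap\{\phi'\}^{\perp}$, and this is upgraded to $\langle\mathcal{L}_0 w,w\rangle\geq c\|w\|_X^2$ by a standard spectral-gap/compactness argument based on the compact embedding $H^{m_1/2}_{per}\hookrightarrow L^2_{per}$.

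With coercivity in hand, orbital stability follows by the usual contradiction scheme. Assume $\phi$ is not stable: then there exist $\varepsilon_0>0$, data $u_{0,n}\to\phi$ in $X$, and first exit times $t_n$ with $\rho(u_n(t_n),\phi)=\varepsilon_0$. Choose translations $y_n$ realising the infimum, so that $v_n:=u_n(t_n)-\phi(\cdot+y_n)$ satisfies $\langle v_n,\phi'(\cdot+y_n)\rangle=0$ and $\|v_n\|_X=\varepsilon_0$. Using translation invariance of $V$ and applying the coercivity to $v_n$ (after shifting every object by $y_n$), together with $|R(v_n)|\leq C\varepsilon_0^3$,
\[
V(u_n(t_n)) \geq \tfrac\beta2\|v_n\|_X^2 - C\|v_n\|_X^3 \geq \tfrac\beta4\varepsilon_0^2
\]
for $\varepsilon_0$ small. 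On the other hand, by conservation and continuity, $V(u_n(t_n)) = V(u_{0,n})\to V(\phi)=0$, a contradiction.

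The main obstacle is the coercivity lemma, and within it the step that extracts non-negativity of $\mathcal{L}_0$ on $\Upsilon_0\cap\{\phi'\}^{\perp}$ from the single number $\mathcal{I}<0$: the spectral data of $(H)$ (one simple negative eigenvalue, simple kernel generated by $\phi'$, positive thereafter) must be combined with a Lagrange-multiplier/min--max argument so that the single negative direction of $\mathcal{L}_0$ is entirely absorbed by the direction $\Phi$ lying outside $\Upsilon_0$. The remaining ingredients (Taylor expansion, conservation, translation bookkeeping, bootstrap from $L^2$- to $X$-coercivity, and control of the cubic remainder) are comparatively routine.
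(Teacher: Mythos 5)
Your proposal follows essentially the same route as the paper: the same augmented functional $V$, the same two-step coercivity argument (first strict positivity of $\mathcal{L}_0$ on $\Upsilon_0\cap\{\phi'\}^{\perp}$ via the constrained-positivity result of \cite{CNP}, then coercivity of $\langle\mathcal{L}_0 v,v\rangle+2N\langle Q'(\phi),v\rangle^{2}$ on all of $\{\phi'\}^{\perp}$ for large $N$), and the same conservation-plus-continuity endgame. Your decomposition $v=\alpha\Phi+w$ is a harmless variant of the paper's decomposition $v=\zeta\,Q'(\phi)/\|Q'(\phi)\|_{L^2_{per}}+z$ in Lemma~\ref{lemma1}: yours makes the cross term vanish exactly and displays precisely where $\mathcal{I}<0$ enters (through $\mathcal{I}+2N\langle Q'(\phi),\Phi\rangle^{2}>0$, which requires $\langle Q'(\phi),\Phi\rangle\neq0$, itself a consequence of $\mathcal{L}_0\Phi=\mu Q'(\phi)$ and $\mathcal{I}<0$), whereas the paper keeps the cross term and absorbs it by Cauchy--Schwarz and Young. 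Both deliver the same inequality, and your contradiction-with-exit-times finish is equivalent to the paper's continuation argument in the proof of Theorem~\ref{teoest}.

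The one step you should tighten is the modulation. Your coercivity inequality is stated for $v\in\{\phi'\}^{\perp}$, where, as in the paper, $\perp$ refers to the $L^2_{per}$ pairing; but choosing $y_n$ to minimize $\|u_n(t_n)-\phi(\cdot+y_n)\|_{X}$ yields orthogonality of $v_n$ to $\phi'(\cdot+y_n)$ in the $H^{m_1/2}_{per}$ inner product, not in $L^2_{per}$, so the hypothesis of your coercivity lemma is not literally satisfied by $v_n$. The paper sidesteps this in Lemma~\ref{lemma2} by defining the translation through the implicit function theorem applied to $S(u,r)=\langle u(\cdot-r),\phi'\rangle$, which produces exactly the $L^2$-orthogonality needed while keeping $\|u(\cdot-r(u))-\phi\|_{X}$ comparable to $\rho(u,\phi)$. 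Alternatively you can keep the minimizing translation and extend the coercivity to the $X$-orthogonal complement of $\phi'$ by splitting off the component along $\phi'$ (which lies in the kernel of $\mathcal{L}_0$ and is controlled by the $X$-orthogonality relation); either fix is routine, but one of them is needed. Everything else in your outline matches the paper's proof.
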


In order to prove Theorem \ref{teoest} we follow the strategy put forward in \cite{CNP}, \cite{NP1}, and \cite{Stuart}. Let us start by showing that $\mathcal{L}_0$ is strictly positive when restricted to the space $\Upsilon_0\cap \{\phi'\}^\perp$.

\begin{lemma}\label{prop2}
		Under assumptions of Theorem \ref{teoest}, there exists a constant $c>0$ such that
	$$\langle\mathcal{L}_0v,v\rangle\geq c||v||_{X}^2,$$
	for all $v\in \Upsilon_0\cap \{\phi'\}^\perp$, where $\left\{\phi'\right\}^{\perp}:=\left\{u\in X ; \langle\phi',u\rangle=0\right\}$.
\end{lemma}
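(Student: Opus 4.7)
The plan is to adapt the Weinstein / Grillakis--Shatah--Strauss coercivity argument, using the auxiliary vector $\Phi$ provided by the hypothesis as a certificate that the unique negative direction of $\mathcal{L}_0$ has already been ``spent'' outside $\Upsilon_0$.

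First, I would exploit the hypothesis to identify $\mathcal{L}_0\Phi$ explicitly. Since $\Upsilon_0=\{v\in X:\langle Q'(\phi),v\rangle=0\}$ is a closed hyperplane and $\mathcal{L}_0\Phi$ annihilates $\Upsilon_0$, there exists $\gamma\in\mathbb{R}$ with $\mathcal{L}_0\Phi=\gamma Q'(\phi)$. If $\gamma=0$, then $\Phi\in\ker\mathcal{L}_0=\mathrm{span}\{\phi'\}$, forcing $\mathcal{I}=0$, contrary to $\mathcal{I}<0$. Hence $\gamma\neq 0$, and pairing with $\Phi$ gives $\gamma\langle Q'(\phi),\Phi\rangle=\mathcal{I}<0$; in particular $\langle Q'(\phi),\Phi\rangle\neq 0$.

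Next, I would establish non-negativity of the quadratic form on $\Upsilon_0$. Suppose, towards a contradiction, that there exists $v_0\in\Upsilon_0$ with $\langle\mathcal{L}_0 v_0,v_0\rangle<0$. Using $v_0\in\Upsilon_0$ and self-adjointness,
\[
\langle\mathcal{L}_0 v_0,\Phi\rangle=\langle v_0,\mathcal{L}_0\Phi\rangle=\gamma\langle v_0,Q'(\phi)\rangle=0.
\]
Moreover $v_0$ and $\Phi$ are linearly independent: a relation $v_0=\lambda\Phi$ with $\lambda\neq 0$ would force $\langle Q'(\phi),\Phi\rangle=0$, which we just excluded. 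Consequently the matrix of $\langle\mathcal{L}_0\cdot,\cdot\rangle$ on $\mathrm{span}\{v_0,\Phi\}$ is diagonal with two negative entries, giving a two-dimensional subspace of negativity. By the min--max principle, $\mathcal{L}_0$ would then have at least two negative eigenvalues, contradicting assumption $(H)$. Hence $\langle\mathcal{L}_0 v,v\rangle\geq 0$ for every $v\in\Upsilon_0$.

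Finally, I would upgrade non-negativity to $X$-coercivity on $\Upsilon_0\cap\{\phi'\}^\perp$. The symbol bound \eqref{A1A2} yields a G\aa{}rding-type inequality
\[
\langle\mathcal{L}_0 v,v\rangle\geq c_1\|v\|_X^2-c_2\|v\|_{L^2}^2.
\]
Setting $\beta:=\inf\{\langle\mathcal{L}_0 v,v\rangle: v\in\Upsilon_0\cap\{\phi'\}^\perp,\ \|v\|_X=1\}\geq 0$, suppose $\beta=0$ and pick a minimizing sequence $v_n$. Since $m_1/2>0$, the embedding $X\hookrightarrow L^2_{per}([0,L_0])$ is compact, so $v_n\rightharpoonup v_\infty$ in $X$ and $v_n\to v_\infty$ in $L^2$. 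If $v_\infty=0$, the G\aa{}rding inequality forces $\|v_n\|_X\to 0$, contradicting $\|v_n\|_X=1$. If $v_\infty\neq 0$, it is a nontrivial constrained minimizer, and the Lagrange multiplier equation reads $\mathcal{L}_0 v_\infty=\alpha\phi'+\beta_* Q'(\phi)$; pairing with $\phi'$ and using $\mathcal{L}_0\phi'=0$ together with $\langle Q'(\phi),\phi'\rangle=0$ (which holds because $\phi$ is even and $\int_0^{L_0}\phi'\,dx=0$) yields $\alpha=0$, and pairing with $\Phi$ gives $\beta_*\langle Q'(\phi),\Phi\rangle=\langle v_\infty,\mathcal{L}_0\Phi\rangle=\gamma\langle v_\infty,Q'(\phi)\rangle=0$, forcing $\beta_*=0$ since $\langle Q'(\phi),\Phi\rangle\neq 0$. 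Thus $\mathcal{L}_0 v_\infty=0$, giving $v_\infty\in\mathrm{span}\{\phi'\}\cap\{\phi'\}^\perp=\{0\}$, again a contradiction.

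The main obstacle is the last step: going from the purely spectral non-negativity (valid in $L^2$) to coercivity in the stronger $X$-norm requires carefully combining G\aa{}rding (to tame the high-frequency part), $L^2$-compactness (to tame the low-frequency part), and a Lagrange multiplier computation for the putative minimizer. All three pieces are needed, and the argument collapses if any of the orthogonalities $\langle Q'(\phi),\phi'\rangle=0$ or $\langle Q'(\phi),\Phi\rangle\neq 0$ fails.
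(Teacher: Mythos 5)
Your argument is correct and is essentially the standard Grillakis--Shatah--Strauss/Stuart coercivity argument: the paper itself gives no proof of Lemma \ref{prop2} but defers to Proposition 4.12 of \cite{CNP}, which proceeds along the same lines (using $\Phi$ and the single negative eigenvalue of $\mathcal{L}_0$ to rule out a two-dimensional negative subspace, then upgrading nonnegativity on $\Upsilon_0\cap\{\phi'\}^{\perp}$ to $X$-coercivity via a G\aa rding inequality, compactness of $X\hookrightarrow L^2_{per}$, and a Lagrange-multiplier identification of a putative minimizer). The only point worth making explicit is that your G\aa rding inequality uses $\omega_0>0$, which is implicit in assumption $(H)$ because $\omega_0<0$ would force $\mathcal{L}_0=\omega_0\mathcal{M}+(\omega_0-1)-\phi$ to have infinitely many negative eigenvalues.
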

\begin{proof}
	See Proposition 4.12 in \cite{CNP}.
\end{proof}

Lemma \ref{prop2} is useful to establish  the following result.

\begin{lemma}\label{lemma1}
	Under assumptions of Theorem \ref{teoest}, there exist $N>0$ and $\tau>0$ such that
	$$\langle\mathcal{L}_0v,v\rangle +2N\langle Q'(\phi),v\rangle^{2}\geq \tau||v||_{X}^2,$$
	for all $v\in \left\{\phi'\right\}^{\perp}$.
\end{lemma}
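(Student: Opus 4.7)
The plan is to decompose an arbitrary $v \in \{\phi'\}^\perp$ as $v = w + \alpha\widetilde{\Phi}$, where
$$
\widetilde{\Phi} := \Phi - \frac{\langle \Phi,\phi'\rangle}{\|\phi'\|_{L^2}^2}\phi'
$$
is the projection of the vector $\Phi$ supplied by Theorem \ref{teoest} onto $\{\phi'\}^\perp$, and the remainder $w$ lies in $\Upsilon_0 \cap \{\phi'\}^\perp$. Lemma \ref{prop2} will then control $w$, while $\widetilde{\Phi}$ plays the role of the one ``bad'' direction along which $\mathcal{L}_0$ remains negative; the penalty $2N\langle Q'(\phi),v\rangle^{2}$ is designed precisely to absorb the defect coming from this direction.

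The first step is to record the properties of $\widetilde{\Phi}$. Since $\mathcal{L}_0$ is self-adjoint and $\mathcal{L}_0\phi' = 0$, a direct computation yields $\langle \mathcal{L}_0\widetilde{\Phi},\widetilde{\Phi}\rangle = \langle \mathcal{L}_0\Phi,\Phi\rangle = \mathcal{I} < 0$ and $\langle \mathcal{L}_0\widetilde{\Phi},\varphi\rangle = \langle \mathcal{L}_0\Phi,\varphi\rangle = 0$ for every $\varphi \in \Upsilon_0$. Next, from $Q'(\phi) = x_0(\mathcal{M}\phi + \phi) + y_0$, periodicity of $\phi$ and the fact that $\mathcal{M}$ commutes with $\partial_x$ give $\langle Q'(\phi),\phi'\rangle = 0$; hence $\langle Q'(\phi),\widetilde{\Phi}\rangle = \langle Q'(\phi),\Phi\rangle$, and this last quantity must be nonzero, because otherwise $\Phi$ would lie in $\Upsilon_0$ and force $\mathcal{I} = 0$, contradicting the hypothesis.

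With $\beta := \langle Q'(\phi),\widetilde{\Phi}\rangle \neq 0$ and $\alpha := \langle Q'(\phi),v\rangle/\beta$, the remainder $w := v - \alpha\widetilde{\Phi}$ belongs to $\Upsilon_0\cap\{\phi'\}^\perp$, and the vanishing of the cross term $\langle\mathcal{L}_0\widetilde{\Phi},w\rangle$ collapses the quadratic form to
$$
\langle \mathcal{L}_0 v,v\rangle \;=\; \langle \mathcal{L}_0 w,w\rangle + \alpha^{2}\mathcal{I} \;=\; \langle \mathcal{L}_0 w,w\rangle + \frac{\mathcal{I}}{\beta^{2}}\langle Q'(\phi),v\rangle^{2}.
$$
Choosing $N$ so large that $2N + \mathcal{I}/\beta^{2} \geq 1$, using Lemma \ref{prop2} to bound $\langle\mathcal{L}_0 w,w\rangle \geq c\|w\|_X^{2}$, and combining with the triangle inequality $\|v\|_X^{2} \leq 2\|w\|_X^{2} + 2(\|\widetilde{\Phi}\|_X^{2}/\beta^{2})\langle Q'(\phi),v\rangle^{2}$ yields the claim with $\tau := \min\{c/2,\ \beta^{2}/(2\|\widetilde{\Phi}\|_X^{2})\}$. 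The one place where care is required, and which I expect to be the only nonroutine step, is the verification that $\langle Q'(\phi),\phi'\rangle = 0$ and $\langle Q'(\phi),\Phi\rangle \neq 0$, since these two identities are exactly what make the two-dimensional orthogonal splitting simultaneously well-defined and effective; everything that follows is a clean Gram-matrix computation.
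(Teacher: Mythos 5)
Your proof is correct, and it takes a genuinely different decomposition from the paper's. The paper splits $v=\zeta w+z$ along the $L^2$-normalized direction $w=Q'(\phi)/\|Q'(\phi)\|_{L^2_{per}}$, so that $z\in\Upsilon_0\cap\{\phi'\}^\perp$ is controlled by Lemma \ref{prop2}; the price is that the cross term $2\zeta\langle\mathcal{L}_0w,z\rangle$ does not vanish and must be absorbed via Cauchy--Schwarz and Young's inequality, producing the extra $\tfrac{2\zeta^2}{c}\|\mathcal{L}_0w\|_X^2$ that is then swallowed by the choice of $N$. You instead split along $\widetilde{\Phi}$, the projection of the distinguished element $\Phi$ onto $\{\phi'\}^\perp$; since $\mathcal{L}_0\widetilde{\Phi}=\mathcal{L}_0\Phi$ annihilates $\Upsilon_0$, your decomposition is $\mathcal{L}_0$-orthogonal, the cross term vanishes identically, and the quadratic form collapses to $\langle\mathcal{L}_0w,w\rangle+\alpha^2\mathcal{I}$ with no need for Young's inequality. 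What you pay for this cleaner algebra is the verification that $\beta=\langle Q'(\phi),\Phi\rangle\neq0$ (which you correctly obtain from $\mathcal{I}<0$, and which also guarantees $\widetilde{\Phi}\neq0$), whereas the paper's direction $Q'(\phi)$ is trivially non-degenerate and makes the relation $\langle Q'(\phi),v\rangle=\zeta\|Q'(\phi)\|_{L^2_{per}}$ immediate. Both arguments rest on the same two pillars --- the coercivity of Lemma \ref{prop2} on $\Upsilon_0\cap\{\phi'\}^\perp$ and the identity $\langle Q'(\phi),\phi'\rangle=0$ (which follows from the evenness of $\phi$, or from antisymmetry of $f\mapsto\int(\mathcal{M}f)f'$) --- and both yield constants $N,\tau$ depending only on $\phi$ (yours also on $\Phi$), so your version is a valid, and arguably tidier, alternative.
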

\begin{proof}
Given $v\in \left\{\phi'\right\}^{\perp}$, define
	$$z=v-\zeta w.$$
where $w=\frac{Q'(\phi)}{||Q'(\phi)||_{L^2_{per}}}$ and $\zeta=\langle v,w\rangle$. Because  $\langle Q'(\phi),\phi'\rangle=0$, it is easily seen that $z\in\Upsilon_0\cap \left\{\phi'\right\}^{\perp}$. Thus, Lemma \ref{prop2} implies
\begin{equation}\label{eq01}
		\langle\mathcal{L}_0v,v\rangle \geq \zeta^2\langle\mathcal{L}_0w,w\rangle + 2\zeta\langle\mathcal{L}_0w,z\rangle +c||z||_X^2.
	\end{equation}
	Using Cauchy-Schwartz and Young's inequalities, we have
	\begin{equation}\label{eq02}
		2\zeta\langle\mathcal{L}_0w,z\rangle \leq \frac{c}{2}||z||_X^2 + \frac{2\zeta^2}{c}||\mathcal{L}_0w||_X^2.
	\end{equation}
	Furthermore, we may choose $N>0$ such that
	\begin{equation}\label{eq03}
		\langle\mathcal{L}_0w,w\rangle -\frac{2}{c}||\mathcal{L}_0w||_X^2 +2N||Q'(\phi)||_{L^2_{per}}^2\geq \frac{c}{2}\|w\|_{X}^2.
	\end{equation}
	We point out that $N$ depends only on $\phi$.
	Therefore, using (\ref{eq01}), (\ref{eq02}) and (\ref{eq03}), we conclude
	\begin{eqnarray}
		\langle\mathcal{L}_0v,v\rangle +2N\langle Q'(\phi),v\rangle^{2}&=&\langle\mathcal{L}_0v,v\rangle + 2N\zeta^2||Q'(\phi)||_{L^2_{per}}^2 \nonumber \\
		&\geq&\frac{c}{2}(\zeta^2\|w\|^2_X+||z||_X^2) \nonumber \\
		&\geq& \tau||v||_{X}^2. \nonumber
	\end{eqnarray}
	The proof is thus completed.
\end{proof}

Let $N>0$ be the constant obtained in the previous lemma. We define the functional $V:X\rightarrow\R$ as
\begin{equation}\label{functionalV}
	V(u)=G(u)-G(\phi)+N(Q(u)-Q(\phi))^2,
\end{equation}
where $G$ is the augmented functional defined in (\ref{lyafun}) with $(\omega,A)=(\omega_0,A_0)$. It is easy to see from $(\ref{functionalV})$ and $(\ref{ode-wave})$ that $V(\phi)=0$ and $V'(\phi)=0$.

\begin{lemma}\label{lemma2}
	There exist $\alpha>0$ and $D>0$ such that 
	$$V(u)\geq D\rho(u,\phi)^2$$
	for all $u\in U_{\alpha}$.
\end{lemma}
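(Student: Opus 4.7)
The strategy is to Taylor expand $V$ about $\phi$, combine this with the coercivity estimate of Lemma \ref{lemma1}, and handle the translation symmetry by a modulation argument. Set $v = u - \phi$.

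First, expand $V$. Because $\phi$ solves \eqref{ode-wave} (so $G'(\phi)=0$) and $G$ has only a cubic nonlinearity coming from the $u^3$ term in $P$, a direct computation yields
\begin{equation*}
G(u) - G(\phi) = \tfrac{1}{2}\langle \mathcal{L}_0 v, v\rangle - \tfrac{1}{6}\int_0^{L_0} v^3\, dx.
\end{equation*}
Similarly, since $Q = x_0 F + y_0 M$ is a polynomial of degree two, one has $Q(u)-Q(\phi) = \langle Q'(\phi),v\rangle + \tfrac{x_0}{2}\langle(\mathcal{M}+I)v,v\rangle$, so expanding $(Q(u)-Q(\phi))^2$ produces the quadratic term $\langle Q'(\phi),v\rangle^2$ plus cubic and quartic remainders. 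Altogether,
\begin{equation*}
V(u) = \tfrac{1}{2}\bigl[\langle \mathcal{L}_0 v, v\rangle + 2N\langle Q'(\phi), v\rangle^2\bigr] + R(v),
\end{equation*}
with $|R(v)| \leq C\|v\|_X^3$ for $\|v\|_X$ bounded. The bound $|\int v^3|\leq C\|v\|_X^3$ uses the Sobolev embedding $X = H^{m_1/2}_{per}([0,L_0]) \hookrightarrow L^3_{per}([0,L_0])$, valid precisely because $m_1 > 1/3$ in \eqref{A1A2}; the other remainder pieces are handled analogously.

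Second, exploit translation invariance. The functionals $P$, $F$, $M$ are all translation invariant, hence so are $G$, $Q$, and therefore $V$. For $\alpha>0$ small enough and $u\in U_\alpha$, the implicit function theorem applied to the $C^1$ map $(u,y)\mapsto \langle u(\cdot-y)-\phi,\phi'\rangle_{L^2_{per}}$ (whose $y$-derivative at $(\phi,0)$ equals $-\|\phi'\|_{L^2_{per}}^2\neq 0$) produces a smooth selector $y=y(u)$ such that $\tilde v := u(\cdot-y(u)) - \phi \in \{\phi'\}^\perp$. A routine comparison with any near-minimizer of $y\mapsto \|u-\phi(\cdot+y)\|_X$ shows $\rho(u,\phi) \leq \|\tilde v\|_X \leq C\,\rho(u,\phi)$, and in particular $\|\tilde v\|_X$ is as small as we wish by shrinking $\alpha$.

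Third, apply Lemma \ref{lemma1} to $\tilde v\in \{\phi'\}^\perp$ and use $V(u)=V(u(\cdot-y(u)))$ to obtain
\begin{equation*}
V(u) \;\geq\; \tfrac{\tau}{2}\,\|\tilde v\|_X^2 - C\,\|\tilde v\|_X^3 \;\geq\; \tfrac{\tau}{4}\,\|\tilde v\|_X^2 \;\geq\; \tfrac{\tau}{4}\,\rho(u,\phi)^2,
\end{equation*}
once $\alpha$ is taken small enough that $C\|\tilde v\|_X < \tau/4$ for every $u\in U_\alpha$. Setting $D=\tau/4$ finishes the proof. The only delicate point, and the main obstacle, is the absorption of the cubic remainder $\int v^3$: this is precisely why the dispersive assumption $m_1 > 1/3$ in \eqref{A1A2} is imposed, as it guarantees the embedding $X\hookrightarrow L^3_{per}$ needed to let the positive quadratic form dominate the higher-order terms.
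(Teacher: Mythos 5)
Your proof is correct and follows essentially the same route as the paper: coercivity of $\langle V''(\phi)\cdot,\cdot\rangle$ on $\{\phi'\}^{\perp}$ via Lemma \ref{lemma1}, absorption of the higher-order remainder for small $\alpha$, and an implicit-function-theorem modulation to reduce a general $u\in U_\alpha$ to the orthogonal case. The only (harmless) difference is that you compute the remainder explicitly as $O(\|v\|_X^3)$ from the cubic structure of $G$ and the embedding $X\hookrightarrow L^3_{per}$, whereas the paper simply invokes the abstract Taylor remainder $h(u)=o(\|u-\phi\|_X^2)$.
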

\begin{proof}
	First, note that from the definition of $V$ it follows that
	$$\langle V''(u)v,v\rangle=\langle G''(u)v,v\rangle+2N(Q(u)-Q(\phi)) \langle Q''(u)v,v\rangle+2N\langle Q'(u),v\rangle^2,$$
	for all $u,v\in X$. In particular, 
	$$\langle V''(\phi)v,v\rangle=\langle \mathcal{L}_0v,v\rangle+2N\langle Q'(\phi),v\rangle^2.$$
	Consequently, from Lemma \ref{lemma1} we get
	\begin{equation} \label{eq001}
	\langle V''(\phi)v,v\rangle\geq\tau||v||_X^2,
	\end{equation}
	for all $v\in \left\{\phi'\right\}^{\perp}$.
	
	On the other hand, a Taylor expansion of $V$ around $\phi$  reveals that
	\begin{equation}\label{eq002}
	V(u)=V(\phi)+ \langle V'(\phi),u-\phi\rangle+\frac{1}{2} \langle V''(\phi)(u-\phi),u-\phi\rangle +h(u),
	\end{equation}
	where $\lim\limits_{u\to\phi}\frac{h(u)}{||u-\phi||_X^2}=0$.
	Thus, we can choose $\alpha_1>0$ such that
	\begin{equation}\label{limit1}|h(u)|\leq\frac{\tau}{4}||u-\phi||_X^2, \qquad \mbox{for all}  \ u\in B_{\alpha_1}(\phi),\end{equation}
	where $B_{\alpha_1}(\phi)=\left\{u\in X; ||u-\phi||_X <\alpha_1 \right\}$.
	
	Since $V(\phi)=0$ and $V'(\phi)=0$, we have from (\ref{eq001}), (\ref{eq002}) and $(\ref{limit1})$ that
	\begin{equation}\label{eq003}
	V(u)\geq \frac{\tau}{4}\rho(u,\phi)^2,
	\end{equation}
	for all $u\in B_{\alpha_1}(\phi)$ such that $(u-\phi)\in \left\{\phi'\right\}^{\perp}.$
	
	Now, let us define the smooth map $S:X\times \mathbb{R}\rightarrow\mathbb{R}$ given by $S(u,r)=\langle u(\cdot-r),\phi'\rangle$. Since $S(\phi,0)=0$ and 
	$\frac{\partial S}{\partial r}(\phi,0)=-\langle\phi',\phi'\rangle\neq0$, we guarantee, from the implicit function theorem, the existence of $\alpha_2>0$, $\delta_0>0$ and a unique $C^1-$map $r:B_{\alpha_2}(\phi)\rightarrow(-\delta_0,\delta_0)$ such that $r(\phi)=0$ and $S(u,r(u))=0$, for all $u\in B_{\alpha_2}(\phi)$. Consequently, $(u(\cdot-r(u))-\phi)\in \left\{\phi'\right\}^{\perp}$, for all $u\in B_{\alpha_2}(\phi)$.
	
	To complete the proof, let $u\in U_{\alpha}$ with $\alpha>0$ arbitrarily fixed. Thus, there exists $r_1\in\mathbb{R}$ such that $\|u_1-\phi\|_X < \alpha$, where $u_1:=u(\cdot-r_1)$. Hence, 
	\begin{equation} \label{eq004}
	(u_1(\cdot-r(u_1))-\phi)\in \left\{\phi'\right\}^{\perp} \quad\mbox{if}\quad \alpha<\alpha_2.
	\end{equation}
	
	On other hand, using the fact that $r$ is continuous and $r(\phi)=0$, one has that there exists $\alpha_3>0$ such that 
	\begin{equation}\label{eq005}
	||u_1(\cdot-r(u_1))-u_1||_X<\frac{\alpha_1}{2}\quad\mbox{if}\quad \alpha<\alpha_3.
	\end{equation}

	 Let us consider $\alpha=\min\left\{\alpha_1/2,\alpha_2,\alpha_3\right\}$. Therefore, we conclude, by $(\ref{eq004})$ and $(\ref{eq005})$, 
	 \begin{eqnarray}
	 \|u_1(\cdot-r(u_1))-\phi\|_X&\leq&\|u_1(\cdot-r(u_1))-u_1\|_X+\|u_1-\phi\|_X \nonumber \\
	 &<&\frac{\alpha_1}{2}+\frac{\alpha_1}{2}=\alpha_1 \nonumber
	 \end{eqnarray}
	 and $(u_1(\cdot-r(u_1))-\phi)\in \left\{\phi'\right\}^{\perp}$. Since $V(u)=V(u_1(\cdot-r_1))$, we obtain, by $(\ref{eq003})$, the existence of $D>0$ such that $V(u)\geq D\rho(u,\phi)^2$.  
\end{proof}

The above lemma is the key point to prove our main result. Roughly speaking, it says that $V$ is a suitable Lyapunov function to handle with our problem. Finally, we present the proof our stability result.

\begin{proof}[Proof of Theorem \ref{teoest}]
	Let $\alpha>0$ be the constant such that Lemma \ref{lemma2} holds.  Since $V$ is continuous at $\phi$, for a given $\varepsilon>0$, there exists $\delta\in (0,\alpha)$ such that if $||u_0-\phi||_X<\delta$  one has
	\begin{equation*}
	V(u_0)=V(u_0)-V(\phi)<D\varepsilon^2,
	\end{equation*}
	where $D>0$ is the constant in Lemma \ref{lemma2}.
	
	The continuity in time of the function $\rho(u(t),\phi)$ allows to choose $T>0$ such that \be\label{subalpha}\rho(u(t),\phi)<\alpha,\ \ \  \mbox{for all}\ t\in [0,T).\ee
	Thus, one obtains $u(t)\in U_{\alpha}$, for all $t\in[0,T)$. Combining Lemma \ref{lemma2} and the fact that  $V(u(t))=V(u_0)$ for all $t\geq0$, we have
	\be\label{estepsilon1}
	\rho(u(t),\phi)<\varepsilon,\ \ \ \ \ \mbox{for all}\ t\in[0,T).
	\ee
	Next, we prove that $\rho(u(t),\phi)<\alpha$, for all $t\in [0,+\infty)$, from which one concludes the orbital stability. Indeed, let  $T_1>0$ be the supremum of the values of $T>0$ for which $(\ref{subalpha})$ holds. To obtain a contradiction, suppose that $T_1<+\infty$.  By choosing $\varepsilon<\frac{\alpha}{2}$ we obtain, from $(\ref{estepsilon1})$,
	$$
	\rho(u(t),\phi)<\frac{\alpha}{2}, \ \ \ \ \ \mbox{for all}\ t\in[0,T_1).
	$$
	Since $t\in(0,+\infty)\mapsto\rho(u(t),\phi)$ is continuous, there is $T_0>0$ such that
	$\rho(u(t),\phi)<\frac{3}{4}\alpha<\alpha$, for $t\in [0,T_1+T_0)$, contradicting the maximality of $T_1$. Therefore, $T_1=+\infty$ and the theorem  is established.
\end{proof}

\section{Sufficient condition for orbital stability}\label{suffstab}

In this section we will give a sufficient condition for the existence of the element $\Phi$ as assumed in Theorem \ref{teoest}. In particular, the main result of the section states that under assumption $(H)$, the periodic wave $\phi$ is orbitally stable provided that the quantity $s(\phi)$, defined in Corollary \ref{coro6789}, is positive. We point out that such a quantity does not depend on any derivative with respect to parameters.

\subsection{Regularity}

Let us start by proving that any solution of \eqref{ode-wave} is in fact smooth. This result will be used below and is the content of the next statement.

\begin{proposition}\label{regularity}
Let $m_1>1/3$. If $\psi\in X:=H_{per}^\frac{m_1}{2}([0,L_0])$ is a solution of $(\ref{ode-wave})$ in the sense of distributions, then $\psi \in H_{per}^n([0,L_0])$, for all $n\in\mathbb{N}$.
\end{proposition}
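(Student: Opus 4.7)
My plan is to bootstrap the regularity of $\psi$ using the equation itself. First I would rewrite $(\ref{ode-wave})$ as
\begin{equation*}
\omega\mathcal{M}\psi+(\omega-1)\psi=\tfrac{1}{2}\psi^2-A
\end{equation*}
and pass to Fourier coefficients. Since the upper bound in \eqref{A1A2} forces $\theta(0)=0$, the $\kappa=0$ relation is a purely algebraic identity for $\widehat{\psi}(0)$ and plays no role in smoothness. For $\kappa\neq 0$ the identity reads
\begin{equation*}
(\omega\theta(\kappa)+\omega-1)\,\widehat{\psi}(\kappa)=\tfrac{1}{2}\,\widehat{\psi^{2}}(\kappa),
\end{equation*}
which says that $(\omega\mathcal{M}+(\omega-1))^{-1}$ acts on $\psi^{2}$ and gains $m_{1}$ derivatives. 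By the lower bound in \eqref{A1A2} the symbol grows like $|\kappa|^{m_{1}}$, so it vanishes on at most a finite set of modes $\Lambda\subset\mathbb{Z}$; the contribution from $\Lambda\cup\{0\}$ is a trigonometric polynomial, hence $C^\infty$, and may be ignored for regularity.

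Next I would couple this $m_{1}$-smoothing with the standard one-dimensional bilinear Sobolev estimate
\begin{equation*}
\|fg\|_{H^{s'}_{per}}\lesssim \|f\|_{H^{s}_{per}}\|g\|_{H^{s}_{per}},\qquad s\geq 0,\ \ s'\leq s,\ \ 2s-s'>\tfrac{1}{2},
\end{equation*}
which degenerates into the algebra property $\|\psi^{2}\|_{H^{s}_{per}}\lesssim\|\psi\|_{H^{s}_{per}}^{2}$ whenever $s>1/2$. Combined with the $m_{1}$-smoothing, this yields the bootstrap rule
\begin{equation*}
\psi\in H^{s}_{per}([0,L_{0}])\ \Longrightarrow\ \psi\in H^{F(s)-\varepsilon}_{per}([0,L_{0}]),\qquad F(s)=\begin{cases} s+m_{1}, & s>\tfrac{1}{2},\\[2pt] 2s+m_{1}-\tfrac{1}{2}, & 0<s\leq\tfrac{1}{2}.\end{cases}
\end{equation*}

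The crux of the argument lies in the low-regularity regime. The map $s\mapsto 2s+m_{1}-\tfrac{1}{2}$ has the unique fixed point $s^{\star}=\tfrac{1}{2}-m_{1}$ and is strictly expanding whenever $s>s^{\star}$. Starting from $s_{0}=m_{1}/2$, the inequality $s_{0}>s^{\star}$ is equivalent to $m_{1}>1/3$, which is precisely the standing assumption. Thus, choosing the $\varepsilon$-loss at each step small compared with the fixed margin $\tfrac{3m_{1}}{2}-\tfrac{1}{2}>0$, the iterates $s_{n+1}=F(s_{n})-\varepsilon$ are strictly increasing and eventually surpass $1/2$; from that point on each iteration gains exactly $m_{1}$ derivatives, so after finitely many additional steps $\psi\in H^{n}_{per}([0,L_{0}])$ for any prescribed $n\in\mathbb{N}$.

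The step I expect to be most delicate is the low-regularity phase near the threshold $m_{1}=1/3$, where the excess $s_{0}-s^{\star}$ becomes small and one must tune the admissible $\varepsilon$-losses so that the sequence $\{s_{n}\}$ remains strictly increasing until it crosses $1/2$. Once this threshold is crossed, the remainder of the argument is routine, since $H^{s}_{per}$ is a Banach algebra for $s>1/2$ and every subsequent iteration trivially gains $m_{1}$ derivatives.
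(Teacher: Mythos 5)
Your proof is correct, but it follows a genuinely different route from the paper's. The paper works at the level of $\ell^p$ norms of Fourier coefficients: it first reduces to $1/3<m_1<1/2$, uses the Sobolev embedding $X\hookrightarrow L^p_{per}$ together with the Hausdorff--Young inequality to place $\widehat{\psi^2}$ in $\ell^q$, then applies H\"older's inequality in $\ell^p$ against the decay of the inverse symbol $(\omega\theta(k)+\omega-1)^{-1}$ to upgrade the summability of $\widehat{\psi}$ step by step until $\widehat{\psi}\in\ell^1$, hence $\psi\in L^\infty_{per}$; only then does it bootstrap in the scale $H^{km_1}_{per}$ via bounds on $\|\mathcal{M}^k\psi\|_{L^2_{per}}$ and the convolution (Wiener) algebra property of $\ell^1$. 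You instead remain in the $H^s_{per}$ scale throughout, replacing the Hausdorff--Young/H\"older step by the bilinear Sobolev product estimate (including negative target regularity) and organizing the low-regularity phase as a fixed-point analysis of $s\mapsto 2s+m_1-\tfrac12$; the condition $s_0=m_1/2>s^\star=\tfrac12-m_1$ is exactly the point where the paper needs $1+\varepsilon<3m_1$, so both arguments see the threshold $m_1>1/3$ in the same place and both are sharp there. Your version is more conceptual and makes the role of the threshold transparent, at the price of invoking the product estimate $H^s\cdot H^s\subset H^{s'}$ for $s'<0$, which is standard but deserves a reference; the paper's version is more elementary and self-contained. You are also slightly more careful on one point the paper glosses over: the symbol $\omega\theta(\kappa)+\omega-1$ may vanish on a finite set of modes, and your observation that the corresponding contribution is a trigonometric polynomial disposes of this cleanly (both arguments, like the paper's, tacitly assume $\omega\neq0$, which is part of hypothesis $(H)$).
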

\begin{proof}
	In view of the embedding $H_{per}^{s_2}([0,L_0])\hookrightarrow H_{per}^{s_1}([0,L_0])$, $s_2\geq s_1>0$, it suffices to assume $1/3<m_1<1/2$.
 First, we will prove that $\psi\in L^{\infty}_{per}([0,L_0])$. Indeed, applying the Fourier transform in (\ref{ode-wave}) yields
\begin{equation*}
\widehat{\psi}(k)=\frac{\widehat{g(\psi)}(k)}{\omega\theta(k)+(\omega-1)}, \quad k\in\mathbb{Z},
\end{equation*}
where $g(\psi)=\frac{\psi^2}{2}-A$. Since $\psi\in X$, it follows that $\psi\in L^p_{per}([0,L_0])$ and $g(\psi)\in L^{\frac{p}{2}}_{per}([0,L_0])$, for all $2\leq p\leq 2/(1-m_1)$. Hence, by Hausdorff-Young inequality, we have
$\widehat{g(\psi)}\in \ell^q$ for all $1/m_1\leq q \leq \infty$. 

On other hand, by (\ref{A1A2}), $\left(\omega\theta(k)+(\omega-1)\right)^{-1}\in \ell^p$ for all $p>1/m_1.$ Let $\varepsilon>0$ be a small number such that $1\leq2/(1+m_1+\varepsilon)$. Thus
\begin{eqnarray*}
\|\widehat{\psi}\|^{2/(1+m_1+\varepsilon)}_{\ell^{2/(1+m_1+\varepsilon)}}&=&\|\widehat{\psi}^{2/(1+m_1+\varepsilon)}\|_{\ell^1} \nonumber \\
&\leq&\|\widehat{g(\psi)}^{2/(1+m_1+\varepsilon)}\|_{\ell^q}\|\left(\omega\theta(k)+(\omega-1)\right)^{-2/(1+m_1+\varepsilon)}\|_{\ell^{q'}},
\end{eqnarray*}
where $q,q'>0$ and $1/q+1/q'=1$. Now, we consider the smallest $q$ such that the first term on the right side is finite. That is, $q=(1+m_1+\varepsilon)/2m_1$. Thus $q'=(1+m_1+\varepsilon)/(1-m_1+\varepsilon)$. In order to obtain that the second term on the right side is finite we need the condition $1/m_1<2q'/(1+m_1+\varepsilon)$ which gives the inequality $1+\varepsilon<3m_1$. Note that $\varepsilon>0$ can always be chosen such that this holds since $m_1>1/3$. Therefore, we get $\widehat{\psi}\in\ell^{2/(1+m_1+\varepsilon)}$ which implies that there exists $\xi\in L^{2/(1-m_1-\varepsilon)}_{per}([0,L_0])$ such that $\widehat{\xi}=\widehat{\psi}$ (see \cite[page 190]{Zygmund}). Hence, using \cite[Corollary 1.51]{Zygmund} we obtain $\xi=\psi$ and so $g(\psi)\in L^{p}_{per}([0,L_0])$ for $1\leq p\leq 1/(1-m_1-\varepsilon)$ and $\widehat{g(\psi)}\in L^{p}_{per}([0,L_0])$ for $1/(m_1+\varepsilon)\leq p \leq \infty$. By iterating the procedure a finite number of times, we obtain 
\begin{equation}\label{l1fourier}
\widehat{\psi}\in\ell^1
\end{equation}
and thus $\psi\in L^{\infty}_{per}([0,L_0])$.

Finally, Plancherel's theorem leads to
\begin{eqnarray}
\|\mathcal{M}\psi\|_{L^2_{per}}&=&\left\|(\omega \mathcal{M}+(\omega-1))^{-1}\mathcal{M}g(\psi)\right\|_{L^2_{per}}=\left\|\frac{\theta(k)}{\omega \theta(k)+(\omega-1)}\widehat{g(\psi)}\right\|_{\ell^2} \nonumber \\
&\leq&\|\widehat{g(\psi)}\|_{\ell^2} = \|g(\psi)\|_{L^2_{per}} \leq \|\psi\|_{L^{\infty}_{per}}\|\psi\|_{L^2_{per}}+A\sqrt{L_0}<\infty, \nonumber
\end{eqnarray}
which implies $\psi \in H_{per}^{m_1}([0,L_0])$. Furthermore, from (\ref{l1fourier}), we have
\begin{eqnarray}
\|\mathcal{M}^2\psi\|_{L^2_{per}}&=&\left\|(\omega \mathcal{M}+(\omega-1))^{-1}\mathcal{M}^2g(\psi)\right\|_{L^2_{per}}=\left\|\frac{\theta(k)^2}{\omega \theta(k)+(\omega-1)}\widehat{g(\psi)}\right\|_{\ell^2} \nonumber \\
&\leq&\|\theta(k)\widehat{g(\psi)}\|_{\ell^2} = \|\mathcal{M}g(\psi)\|_{L^2_{per}} \nonumber \\
&\leq&\|g(\psi)\|_{H^{m_1}_{per}} \leq \|\psi^2\|_{H^{m_1}_{per}}+A\sqrt{L_0} \nonumber \\
&=&\|(1+|k|^2)^{\frac{m_1}{2}}(\widehat{\psi}\ast\widehat{\psi})(k)\|_{\ell^2}+ A\sqrt{L_0} \nonumber \\
&\leq& K_{m_1}\left[\|\widehat{\psi}\|_{\ell^1}\|\widehat{\psi}\|_{\ell^2}+2\|(\cdot)^{m_1}\widehat{\psi}\|_{\ell^2}\|\widehat{\psi}\|_{\ell^1}\right]+A\sqrt{L_0}<\infty, \nonumber
\end{eqnarray}
where $K_{m_1}>0$ is a constant depending only on $m_1$. After iterations, we conclude that $\psi \in H_{per}^n([0,L_0])$, for all $n\in\mathbb{N}$.
\end{proof}

\subsection{Existence of a smooth surface of periodic waves} As an intermediate step to obtain our main result, we will prove that $(H)$ is sufficient to show the existence of a smooth surface of periodic waves. This will be a consequence of the Implicit Function Theorem.

\begin{theorem}\label{existcurve}
Suppose that assumption (H) holds. Then, there exist an open neighborhood $\mathcal{O}$ containing $(\omega_0,A_0)$ and a smooth surface 
\begin{equation*}
(\omega,A)\in \mathcal{O}\mapsto \phi_{(\omega,A)} \in H_{per,e}^n([0,L_0]), \quad n\in\mathbb{N},
\end{equation*}
of even $L_0$-periodic solutions of (\ref{ode-wave}).  

In particular, $\phi_{(\omega,A)}\to \phi_{(\omega_0,A_0)}$, as $(\omega,A)\to(\omega_0,A_0)$, in $L^\infty_{per}([0,L_0])$.
\end{theorem}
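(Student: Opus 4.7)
The plan is to set up the Implicit Function Theorem for equation \eqref{ode-wave} on the even subspace and then bootstrap regularity using Proposition \ref{regularity}. Pick an integer $n$ large enough that $H^n_{per}([0,L_0])$ is a Banach algebra and $n\geq m_1$ (for instance $n=\max\{1,\lceil m_1\rceil\}$). Consider the map
\begin{equation*}
F:H^n_{per,e}([0,L_0])\times\mathbb{R}^2 \longrightarrow H^{n-m_1}_{per,e}([0,L_0]), \qquad
F(\psi,\omega,A)=\omega\mathcal{M}\psi+(\omega-1)\psi-\tfrac{1}{2}\psi^2+A.
\end{equation*}
This is well defined because $\theta$ is even, so $\mathcal{M}$ preserves parity, and because $H^n_{per}$ is an algebra; it is also of class $C^\infty$. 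By $(H)$ together with Proposition \ref{regularity}, the reference wave $\phi=\phi_{(\omega_0,A_0)}$ lies in $H^n_{per,e}([0,L_0])$ and satisfies $F(\phi,\omega_0,A_0)=0$.

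The main step, and the one I expect to be the principal obstacle, is to verify that the Fr\'echet derivative $D_\psi F(\phi,\omega_0,A_0)=\mathcal{L}_0$, viewed as an operator from $H^n_{per,e}([0,L_0])$ to $H^{n-m_1}_{per,e}([0,L_0])$, is an isomorphism. From \eqref{A1A2} the Fourier symbol $\kappa\mapsto\omega_0\theta(\kappa)+(\omega_0-1)$ vanishes at only finitely many $\kappa\in\mathbb{Z}$, so $\omega_0\mathcal{M}+(\omega_0-1)$ is a bounded Fredholm operator of index zero between these spaces. The zeroth-order perturbation $\psi\mapsto-\phi\psi$ maps $H^n_{per,e}$ continuously into itself (since $\phi\in H^n_{per}$ and $H^n_{per}$ is an algebra), and composing with the compact embedding $H^n_{per}\hookrightarrow H^{n-m_1}_{per}$ makes it a compact operator into $H^{n-m_1}_{per,e}$. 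Hence $\mathcal{L}_0|_{H^n_{per,e}}$ is Fredholm of index zero. By $(H)$ the kernel of $\mathcal{L}_0$ in $L^2_{per}$ is spanned by $\phi'$, which is odd because $\phi$ is even; hence its restriction to the even subspace has trivial kernel, and the Fredholm alternative upgrades this to surjectivity, proving invertibility.

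With the isomorphism in hand, the Implicit Function Theorem furnishes an open neighborhood $\mathcal{O}$ of $(\omega_0,A_0)$ and a unique $C^\infty$ map $(\omega,A)\mapsto\phi_{(\omega,A)}\in H^n_{per,e}([0,L_0])$ satisfying $F(\phi_{(\omega,A)},\omega,A)=0$ with $\phi_{(\omega_0,A_0)}=\phi$. Applying Proposition \ref{regularity} at each parameter promotes $\phi_{(\omega,A)}$ to $H^N_{per,e}([0,L_0])$ for every $N\in\mathbb{N}$, and rerunning the IFT argument in $H^N_{per,e}$ (invoking uniqueness to identify the branches after possibly shrinking $\mathcal{O}$) shows the assignment is smooth as a map into $H^N_{per,e}([0,L_0])$ for every $N$. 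The final $L^\infty_{per}$ convergence is then immediate from the continuity of $(\omega,A)\mapsto\phi_{(\omega,A)}$ into $H^1_{per,e}$ and the Sobolev embedding $H^1_{per}\hookrightarrow L^\infty_{per}$.
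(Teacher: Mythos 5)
Your proof is correct and follows essentially the same strategy as the paper's: an Implicit Function Theorem argument on the even subspace, with injectivity of the linearized operator coming from the oddness of $\phi'$ under assumption $(H)$, and regularity handled by Proposition \ref{regularity}. The only difference is in the packaging of the surjectivity step: the paper realizes the linearization as a self-adjoint operator on $L^2_{per,e}([0,L_0])$ with domain $H^{m_1}_{per,e}([0,L_0])$, uses the compact resolvent to see that its spectrum is purely discrete, and concludes that $0$ lies in the resolvent set, whereas you obtain the same invertibility between $H^n_{per,e}([0,L_0])$ and $H^{n-m_1}_{per,e}([0,L_0])$ via a Fredholm index-zero perturbation argument; both are standard and equally valid here.
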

\begin{proof}
We first define $\Upsilon:(0,+\infty)\times\R\times H_{per,e}^{m_1}([0,L_0])\to L_{per,e}^2([0,L_0])$ by
\begin{equation}\label{Ups}
\Upsilon(\omega,A,f)=\omega\mathcal{M}f+(\omega-1)f-\frac{1}{2}f^2+A,
\end{equation}
where $H_{per,e}^{m_1}([0,L_0])$  indicates the periodic Sobolev space $H_{per}^{m_1}([0,L_0])$ constituted by even $L_0$-periodic functions. The case $\Upsilon:(-\infty,0)\times\R\times H_{per,e}^{m_1}([0,L_0])\to L_{per,e}^2([0,L_0])$ can be determine using similar arguments. Since $\theta$ is an even function we have that $\mathcal{M}f$ is also even, for any  function $f$ in $H_{per,e}^{m_1}([0,L_0])$. In addition, since $m_1>1/3$ the Sobolev embedding implies $f^2\in L_{per}^{2}([0,L_0])$. This means that \eqref{Ups} makes sense in $ L_{per,e}^2([0,L_0])$.

 From assumption \textit{(H)} one has $\Upsilon(\omega_0,A_0,\phi)=0$. Moreover, note that $\Upsilon$ is smooth and its Fr\'echet derivative with respect to $\phi$ evaluated at $(\omega_0,A_0,\phi)$ is 
$$\mathcal{G}:=\omega_0\mathcal{M}+(\omega_0-1)-\phi.$$ 
From \eqref{ode-wave} it is easily seem that $\phi'$ is an eigenfunction of the operator $\mathcal{G}$ (defined on $L_{per}^2([0,L_0])$ with domain $H_{per}^{m_1}([0,L_0])$) whose eigenvalue is $\lambda=0.$
Since $\phi'$ does not belong to $H_{per,e}^{m_1}([0,L_0])$ (because it is odd), we conclude that $\mathcal{G}$ is one-to-one. 

Next, let us prove that $\mathcal{G}$ is also surjective. Indeed, $\mathcal{G}$ is clearly a self-adjoint operator. Thus, the spectrum of $\mathcal{G}$, denoted by $\sigma(\mathcal{G})$ is such that $\sigma(\mathcal{G})=\sigma_{disc}(\mathcal{G})\cup\sigma_{ess}(\mathcal{G})$, where $\sigma_{disc}(\mathcal{G})$ and $\sigma_{ess}(\mathcal{G})$ stand, respectively, for the discrete and essential spectra. Being $H_{per,e}^{m_1}([0,L_0])$ compactly embedded in $L_{per,e}^2([0,L_0])$, the operator $\mathcal{G}$ has compact resolvent. Consequently, $\sigma_{ess}(\mathcal{G})=\emptyset$ and $\sigma(\mathcal{G})=\sigma_{disc}(\mathcal{G})$ consists of isolated eigenvalues with finite algebraic multiplicities (see also Proposition 3.1 in \cite{AN}). Finally, since $\mathcal{G}$ is one-to-one, it follows that $0$ is not an eigenvalue of $\mathcal{G}$, and so it does not belong to $\sigma(\mathcal{G})$. Therefore, $0\in \rho(\mathcal{G})$, where $\rho(\mathcal{G})$ denotes the resolvent set of $\mathcal{G}$, and consequently, by definition, $\mathcal{G}$ is surjective. 

The arguments above imply that $\mathcal{G}^{-1}$ exists and is a bounded linear operator. Thus, since $\Upsilon$ and its derivative with respect to $f$  are smooth maps on their domains, from the Implicit Function Theorem (see, for instance, Theorem 15.1 in \cite{Deimling}) and Proposition \ref{regularity} we establish the desired results.
\end{proof}

Next result shows that the spectral property in $(H)$ is preserved by small perturbations of the parameter $(\omega,A)$ in an open subset containing $(\omega_0,A_0)$.

\begin{proposition}\label{prop1}
Suppose that assumption (H) holds and let $\phi_{(\omega,A)}$ be the periodic traveling wave solution obtained in Theorem \ref{existcurve}. Then, for all $(\omega,A)\in\mathcal{O}$, operator $\mathcal{L}_{(\omega,A)}=\omega\mathcal{M}+(\omega-1)-\phi_{(\omega,A)}$ has only one negative eigenvalue which is simple and zero is a simple eigenvalue whose eigenfunction is $\phi_{(\omega,A)}'$
\end{proposition}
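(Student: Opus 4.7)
The plan is to treat $\{\mathcal{L}_{(\omega,A)}\}_{(\omega,A)\in\mathcal{O}}$ as a continuous family of self-adjoint operators on $L^2_{per}([0,L_0])$ with common domain $H_{per}^{m_1}([0,L_0])$, and to invoke Kato's stability theory for isolated eigenvalues of finite algebraic multiplicity (see \cite{kato1}, Chapter IV, $\S 3$), possibly after shrinking $\mathcal{O}$. A preliminary observation: differentiating the profile equation (\ref{ode-wave}) for $\phi_{(\omega,A)}$ with respect to $x$ yields $\mathcal{L}_{(\omega,A)}\phi'_{(\omega,A)}=0$ for every parameter, so $0$ is automatically an eigenvalue of $\mathcal{L}_{(\omega,A)}$ with $\phi'_{(\omega,A)}$ in its kernel. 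Moreover, exactly as in the proof of Theorem \ref{existcurve}, the compact embedding $H_{per}^{m_1}\hookrightarrow L^2_{per}$ shows that $\mathcal{L}_{(\omega,A)}$ has compact resolvent; hence its spectrum is purely discrete, bounded below, and accumulates only at $+\infty$.

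Next I would verify norm resolvent convergence of the family at $(\omega_0,A_0)$. Writing
\[
\mathcal{L}_{(\omega,A)}-\mathcal{L}_0=(\omega-\omega_0)(\mathcal{M}+I)-\bigl(\phi_{(\omega,A)}-\phi_{(\omega_0,A_0)}\bigr),
\]
the first summand is bounded from $H_{per}^{m_1}$ into $L^2_{per}$ with operator norm $O(|\omega-\omega_0|)$ thanks to (\ref{A1A2}), while the second is a multiplication operator whose $L^\infty_{per}$-norm tends to zero by the last assertion of Theorem \ref{existcurve}. Consequently $\mathcal{L}_{(\omega,A)}-\mathcal{L}_0\to 0$ in the operator norm from $H_{per}^{m_1}$ to $L^2_{per}$, which, combined with the second resolvent identity, furnishes norm resolvent convergence in $L^2_{per}$ as $(\omega,A)\to(\omega_0,A_0)$, i.e., convergence in Kato's generalized sense.

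With these ingredients in place, the core argument is standard. Let $\lambda_0<0$ be the simple negative eigenvalue of $\mathcal{L}_0$ and let $\lambda_1>0$ be its smallest strictly positive eigenvalue, both provided by $(H)$. Pick disjoint open disks $D_{-}$ around $\lambda_0$ and $D_{0}$ around $0$ such that $\overline{D_{-}\cup D_{0}}\cap\sigma(\mathcal{L}_0)=\{\lambda_0,0\}$ and $\overline{D_{-}\cup D_{0}}\subset(-\infty,\lambda_1)$. Kato's stability theorem, applied via Riesz projections integrated over $\partial D_{-}$ and $\partial D_{0}$, then guarantees that for $(\omega,A)$ sufficiently close to $(\omega_0,A_0)$ the operator $\mathcal{L}_{(\omega,A)}$ has exactly one algebraically simple eigenvalue in $D_{-}$ and exactly one in $D_{0}$, and no further spectrum in $(-\infty,\lambda_1)$. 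By the preliminary observation the eigenvalue in $D_{0}$ must equal $0$ itself with eigenfunction $\phi'_{(\omega,A)}$, while the eigenvalue in $D_{-}$ is the unique simple negative eigenvalue of $\mathcal{L}_{(\omega,A)}$, yielding the claim.

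The main technical point, and the step that deserves the most care, is the rank-preservation of the Riesz projections: one needs a uniform bound of the resolvent on $\partial D_{-}\cup\partial D_{0}$, which the norm resolvent convergence established in the second step is designed to supply. A secondary bookkeeping issue is ensuring that no high-lying positive eigenvalue of $\mathcal{L}_0$ crosses into $(-\infty,\lambda_1)$; this is excluded because the complementary spectral projection of $\mathcal{L}_0$ onto $[\lambda_1,\infty)$ is also preserved in rank by the same stability theorem. Everything else follows either directly from $(H)$ or from the smoothness of the surface $(\omega,A)\mapsto\phi_{(\omega,A)}$ furnished by Theorem \ref{existcurve}.
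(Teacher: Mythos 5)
Your proposal is correct and follows essentially the same route as the paper: both rest on Kato's perturbation theory, using the convergence $\phi_{(\omega,A)}\to\phi_{(\omega_0,A_0)}$ in $L^{\infty}_{per}$ from Theorem \ref{existcurve} to obtain convergence of the operator family in a resolvent sense, and then invoking the stability of isolated simple eigenvalues (Kato, Ch.~IV, Thm.~3.16) together with the a priori fact that $0$ remains an eigenvalue with eigenfunction $\phi'_{(\omega,A)}$. The only cosmetic difference is that the paper first normalizes to $\widetilde{\mathcal{L}}_{(\omega,A)}=\frac{1}{\omega}\mathcal{L}_{(\omega,A)}$ and phrases the convergence in the gap metric $\widehat{\delta}$, whereas you keep $\mathcal{L}_{(\omega,A)}$ and spell out the norm-resolvent and Riesz-projection bookkeeping explicitly.
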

\begin{proof}
Assume $(\omega,A)\in \mathcal{O}$ and define  
$$
\widetilde{\mathcal{L}}_{(\omega,A)}:=\frac{1}{\omega}\mathcal{L}_{(\omega,A)}=\mathcal{M}+\frac{\omega-1}{\omega}-\frac{\phi_{(\omega,A)}}{\omega}.
$$ 
It is clear that such an operator defined on $L_{per}^2([0,L_0])$ with domain $D(\mathcal{L})=H_{per}^{m_1}([0,L_0])$ is also self-adjoint. Thus, since $\mathcal{L}_{(\omega,A)}=\omega \widetilde{\mathcal{L}}_{(\omega,A)}$ and $\omega\neq0$, it suffices to prove that the statements in the  proposition holds for $\widetilde{\mathcal{L}}_{(\omega,A)}$.

Let us first show that $\widetilde{\mathcal{L}}_{(\omega,A)}$ converges to $\widetilde{\mathcal{L}}_{(\omega_0,A_0)}$, as $(\omega,A)\to(\omega_0,A_0)$, in the metric  \textit{gap} $\widehat{\delta}$ (see Sections 2 and 3 of Chapter IV in \cite{kato1}). 
Indeed, since the multiplication operator $\Lambda:L_{per}^{2}([0,L_0])\to L_{per}^{2}([0,L_0])$ defined by $\Lambda f=(\phi_{(\omega,A)}/\omega)f$ is bounded with norm $\|\Lambda\|\leq \|\phi_{(\omega,A)}\|_{L^\infty_{per}}/\omega$, Theorem 2.17 in \cite[Chapter IV]{kato1} implies that
\begin{equation}\label{deltati1}
\begin{split}
\widehat{\delta}(\widetilde{\mathcal{L}}_{(\omega_0,A_0)},\widetilde{\mathcal{L}}_{(\omega,A)}) \leq &\, 2\left(1+\frac{\|\phi_{(\omega,A)}\|_{L^{\infty}_{per}}^2}{\omega^2}\right)\widehat{\delta}\left(\widetilde{\mathcal{L}}_{(\omega_0,A_0)}+\frac{\phi_{(\omega,A)}}{\omega},\mathcal{M}+\frac{\omega-1}{\omega}\right) 
\end{split}
\end{equation}
Now, by using the multiplication operator
$$
f\mapsto \left[\frac{\omega_0-1}{\omega_0}-\frac{\omega-1}{\omega}+\frac{\phi_{(\omega,A)}}{\omega}-\frac{\phi_{(\omega_0,A_0)}}{\omega_0}\right]f
$$
is also bounded in $L_{per}^{2}([0,L_0])$ with norm below 
$$
\left|\frac{\omega_0-1}{\omega_0}-\frac{\omega-1}{\omega}\right|+\left\|\frac{\phi_{(\omega,A)}}{\omega}-\frac{\phi_{(\omega_0,A_0)}}{\omega_0}\right\|_{L^{\infty}_{per}},
$$
an application of Theorem 2.14  in \cite[Chapter IV]{kato1} yields
\begin{equation}\label{deltati}
\begin{split}
 \widehat{\delta}\left(\widetilde{\mathcal{L}}_{(\omega_0,A_0)}+\frac{\phi_{(\omega,A)}}{\omega},\mathcal{M}+\frac{\omega-1}{\omega}\right) 
\leq &\left[\left|\frac{\omega_0-1}{\omega_0}-\frac{\omega-1}{\omega}\right|+\left\|\frac{\phi_{(\omega,A)}}{\omega}-\frac{\phi_{(\omega_0,A_0)}}{\omega_0}\right\|_{L^{\infty}_{per}}\right].
\end{split}
\end{equation}
By recalling that  $\phi_{(\omega,A)}\to \phi_{(\omega_0,A_0)}$, as $(\omega,A)\to(\omega_0,A_0)$, in $L^\infty_{per}([0,L_0])$, a combination of \eqref{deltati1} and \eqref{deltati} finally establish that $\widehat{\delta}(\widetilde{\mathcal{L}}_{(\omega_0,A_0)},\widetilde{\mathcal{L}}_{(\omega,A)})\to 0$, as $(\omega,A)\to(\omega_0,A_0)$. 

Consequently, by taking into account that zero is an eigenvalue of $\widetilde{\mathcal{L}}_{(\omega,A)}$ with eigenfunction $\phi_{(\omega,A)}'$, from Theorem 3.16  in \cite[Chapter IV]{kato1}, we conclude that  for $(\omega,A)$ in a neighborhood of $(\omega_0,A_0)$, $\widetilde{\mathcal{L}}_{(\omega,A)}$ has the same spectral properties of $\mathcal{L}_{(\omega_0,A_0)}$, which is to say that it has only one negative eigenvalue which is simple and zero is a simple eigenvalue. At this point, it should be clear that if necessary we can take a neighborhood smaller than $\mathcal{O}$. However, for convenience we assume that such a set is the whole $\mathcal{O}$. 
\end{proof}

 Since we have obtained a smooth surface of periodic solutions with a fixed period $L_0>0$, we can define
$$
\eta:=\frac{\partial}{\partial\omega}\phi_{(\omega,A)}\Big|_{(\omega_0,A_0)},\ \qquad\beta:=\frac{\partial}{\partial A}\phi_{(\omega,A)}\Big|_{(\omega_0,A_0)},
$$
Next we set
 $$
 M_{\omega}(\phi)=\frac{\partial}{\partial\omega}\int_0^{L_0}\phi_{(\omega,A)}(x)dx\Big|_{(\omega_0,A_0)},\qquad  M_{A}(\phi)=\frac{\partial}{\partial A}\int_0^{L_0}\phi_{(\omega,A)}(x)dx\Big|_{(\omega_0,A_0)},
 $$

 $$
 F_{\omega}(\phi)=\frac{1}{2}\frac{\partial}{\partial{\omega}}\int_0^{L_0}\Big(\phi_{(\omega,A)}\mathcal{M}\phi_{(\omega,A)}+\phi_{(\omega,A)}^2(x)\Big)dx\Big|_{(\omega_0,A_0)},$$
and  $$F_{A}(\phi)=\frac{1}{2}\frac{\partial}{\partial{A}}\int_0^{L_0}\Big(\phi_{(\omega,A)}\mathcal{M}\phi_{(\omega,A)}+\phi_{(\omega,A)}^2(x)\Big)dx\Big|_{(\omega_0,A_0)}.
 $$

These quantities will be very useful in what follows.

\begin{proposition}\label{propKpos}
	Let $\Delta:\R^2\to\R$ be the function defined as
	$$
	\Delta(x,y)=x^2F_\omega(\phi)+xy(M_\omega(\phi)+F_A(\phi))+y^2M_A(\phi).
	$$
	Assume the existence of  $(x_0,y_0)\in\R^2$ such that $\Delta(x_0,y_0)>0$. Then, there exists $\Phi\in X$ such that   $\langle\mathcal{L}_0\Phi,\varphi\rangle=0$, for all $\varphi\in \Upsilon_0$, and
	\begin{equation*}
	\mathcal{I}=\langle \mathcal{L}_0\Phi,\Phi \rangle<0.
	\end{equation*}
\end{proposition}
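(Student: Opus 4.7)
The plan is to construct $\Phi$ explicitly as a linear combination of the parameter derivatives $\eta = \partial_\omega \phi_{(\omega,A)}|_{(\omega_0,A_0)}$ and $\beta = \partial_A \phi_{(\omega,A)}|_{(\omega_0,A_0)}$, which are available thanks to the smooth surface built in Theorem \ref{existcurve}. Specifically, I would try $\Phi = x_0\eta + y_0\beta$, where $(x_0,y_0)$ is the pair provided by the hypothesis $\Delta(x_0,y_0)>0$.

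First I would differentiate the profile equation \eqref{ode-wave} with respect to $\omega$ and $A$ at $(\omega_0,A_0)$. Differentiating in $\omega$ yields $\omega_0\mathcal{M}\eta+(\omega_0-1)\eta-\phi\,\eta=-\mathcal{M}\phi-\phi$, i.e. $\mathcal{L}_0\eta=-(\mathcal{M}\phi+\phi)$, while differentiating in $A$ yields $\mathcal{L}_0\beta=-1$. Since $Q(u)=x_0F(u)+y_0M(u)$ gives $Q'(\phi)=x_0(\mathcal{M}\phi+\phi)+y_0$, these two identities combine to give the key formula
\begin{equation*}
\mathcal{L}_0\Phi = x_0\,\mathcal{L}_0\eta+y_0\,\mathcal{L}_0\beta = -Q'(\phi).
\end{equation*}

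With this in hand, the first required property is immediate: for any $\varphi\in\Upsilon_0$, $\langle\mathcal{L}_0\Phi,\varphi\rangle=-\langle Q'(\phi),\varphi\rangle=0$ by the very definition of $\Upsilon_0$. For the second property, I would compute
\begin{equation*}
\mathcal{I}=\langle\mathcal{L}_0\Phi,\Phi\rangle=-\langle Q'(\phi),x_0\eta+y_0\beta\rangle,
\end{equation*}
and then identify the four resulting inner products with the quantities $F_\omega(\phi)$, $F_A(\phi)$, $M_\omega(\phi)$, $M_A(\phi)$. The identification uses that $F_\omega(\phi)=\int_0^{L_0}(\mathcal{M}\phi+\phi)\eta\,dx$ (obtained by differentiating \eqref{Fu} in $\omega$ and exploiting the self-adjointness of $\mathcal{M}$), and analogously $F_A(\phi)=\langle\mathcal{M}\phi+\phi,\beta\rangle$, while $M_\omega(\phi)=\int_0^{L_0}\eta\,dx$ and $M_A(\phi)=\int_0^{L_0}\beta\,dx$ follow directly from \eqref{Mu}. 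Assembling these gives
\begin{equation*}
\mathcal{I}=-\bigl[x_0^2F_\omega(\phi)+x_0y_0(M_\omega(\phi)+F_A(\phi))+y_0^2M_A(\phi)\bigr]=-\Delta(x_0,y_0)<0,
\end{equation*}
which is the desired conclusion.

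I do not anticipate a serious obstacle: the construction is essentially algebraic once the smooth surface has been produced, and the regularity $\eta,\beta\in X$ is automatic because the map $(\omega,A)\mapsto\phi_{(\omega,A)}$ is smooth into $H^n_{per,e}([0,L_0])$ for every $n$ by Theorem \ref{existcurve}. The only mild care needed is the symmetry computation showing $F_\omega(\phi)=\langle\mathcal{M}\phi+\phi,\eta\rangle$ and its analogue for $F_A$, which relies on $\mathcal{M}$ being self-adjoint (an immediate consequence of $\theta$ being real and even).
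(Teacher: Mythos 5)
Your proposal is correct and follows essentially the same route as the paper: the paper also takes $\Phi=x_0\eta+y_0\beta$, uses $\mathcal{L}_0\eta=-(\mathcal{M}\phi+\phi)$ and $\mathcal{L}_0\beta=-1$ to get $\mathcal{L}_0\Phi=-Q'(\phi)$ (hence orthogonality to $\Upsilon_0$), and identifies $\langle\mathcal{L}_0\Phi,\Phi\rangle$ with $-\Delta(x_0,y_0)$ via the quantities $F_\omega$, $F_A$, $M_\omega$, $M_A$. Your write-up is in fact slightly more detailed than the paper's, which states these identifications without spelling out the self-adjointness computation.
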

\begin{proof}
	It suffices to define  $\Phi:=x_0\eta+y_0\beta$. Indeed, since $\mathcal{L}_0\beta=-1$ and $\mathcal{L}_0\eta=-(\mathcal{M}\phi+\phi)$, it is clear that  $\langle\mathcal{L}_0\Phi,\varphi\rangle=0$, for all $\varphi\in \Upsilon_0$, and
	\[
	\begin{split}
	\langle\mathcal{L}_0\Phi,\Phi \rangle&=\langle-x_0\phi-y_0,x_0\eta+y_0\beta\rangle\\
	&=-(x_0^2F_\omega(\phi)+x_0y_0M_\omega(\phi)+x_0y_0F_A(\phi)+y_0^2M_A(\phi))\\
	&=-\Delta(x_0,y_0).
	\end{split}
	\]
	The proof is thus completed.
\end{proof}


Next result gives a sufficient condition to obtain $(x_0,y_0)$ satisfying $\Delta(x_0,y_0)>0$. Consequently, we are in conditions to prove Theorem \ref{coro14}.

\begin{corollary}\label{coro6789}
Suppose that assumption (H) holds. If $\omega_0-1-2A_0\neq0$ and 
$s(\phi)$ defined in $(\ref{sphi123})$ is positive,
there exists $(x_0,y_0)\in\R^2$ such that $\Delta(x_0,y_0)>0$. 
\end{corollary}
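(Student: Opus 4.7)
The strategy is to exhibit an explicit pair $(x_0,y_0)$ for which $\Delta(x_0,y_0)$ equals a positive multiple of $s(\phi)$. Guided by Proposition~\ref{propKpos}, one has $\Delta(x,y)=-\langle\mathcal{L}_0(x\eta+y\beta),x\eta+y\beta\rangle$, with $\mathcal{L}_0\eta=-(\mathcal{M}\phi+\phi)$ and $\mathcal{L}_0\beta=-1$ obtained by differentiating \eqref{ode-wave} in $\omega$ and $A$. The candidate will be $(x_0,y_0)=\bigl(1,\,-(\omega_0-1-2A_0)/\omega_0\bigr)$, which is well defined because $\omega_0\neq 0$ by assumption $(H)$.

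The bulk of the work is to derive two scalar reduction identities. Testing $\mathcal{L}_0\beta=-1$ against $\phi$ in $L^2_{per}([0,L_0])$, invoking self-adjointness together with the consequence $\mathcal{L}_0\phi=-\tfrac12\phi^2-A_0$ of \eqref{ode-wave}, and expressing $\int_0^{L_0}\phi\beta\,dx$ via integration of both sides of $\mathcal{L}_0\beta=-1$ over $[0,L_0]$, yields
\begin{equation*}
\omega_0 M_\omega=(\omega_0-1-2A_0)M_A+L_0+M(\phi).
\end{equation*}
An entirely parallel computation applied to $\eta$, combined with the integrated profile equation $\int_0^{L_0}\phi^2\,dx=2(\omega_0-1)M(\phi)+2A_0L_0$ (which follows from $\int_0^{L_0}\mathcal{M}\phi\,dx=0$, since $\theta(0)=0$), produces
\begin{equation*}
\omega_0 F_\omega=(\omega_0-1-2A_0)M_\omega+M(\phi)+\langle\phi,\mathcal{M}\phi\rangle+\int_0^{L_0}\phi^2\,dx.
\end{equation*}

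Eliminating $M_\omega$ and $\int_0^{L_0}\phi^2\,dx$ between the last two displays and collecting terms according to \eqref{sphi123} produces the decisive identity
\begin{equation*}
\omega_0^2 F_\omega=s(\phi)+(\omega_0-1-2A_0)^2 M_A+2(\omega_0-1-2A_0)\bigl(M(\phi)+L_0\bigr).
\end{equation*}
Writing $c:=\omega_0-1-2A_0$, the choice $(x_0,y_0)=(1,-c/\omega_0)$ gives
\begin{equation*}
\omega_0^2\Delta(x_0,y_0)=\omega_0^2 F_\omega-2c\,\omega_0 M_\omega+c^2 M_A,
\end{equation*}
and substituting the identity for $\omega_0^2 F_\omega$ above and the expression for $\omega_0 M_\omega$ from the first reduction, a direct check shows that every term involving $M_A$, $M(\phi)$, and $L_0$ cancels, leaving $\omega_0^2\Delta(x_0,y_0)=s(\phi)>0$ by hypothesis.

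The main obstacle is purely algebraic: the two reduction identities use the profile equation three times (to rewrite $\mathcal{L}_0\phi$, to integrate \eqref{ode-wave} against $1$, and in the differentiated forms defining $\eta$ and $\beta$), together with several integrations by parts and the fact that $\mathcal{M}$ has vanishing zero mode. The non-trivial content of the corollary is that after these manipulations the surviving combination collapses precisely to $s(\phi)$ as defined in \eqref{sphi123}, which \emph{a posteriori} explains the particular coefficients appearing there; the extra hypothesis $\omega_0-1-2A_0\neq 0$ guides the choice of $y_0$ and is consistent with the way this corollary feeds into Theorem~\ref{coro14}.
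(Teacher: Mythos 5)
Your proposal is correct and follows essentially the same route as the paper: your two reduction identities are exactly the paper's relations $\omega F_\omega = M+2F+(\omega-1-2A)M_\omega$ and $\omega M_\omega = L_0+M+(\omega-1-2A)M_A$, and your choice $(x_0,y_0)=(1,-(\omega_0-1-2A_0)/\omega_0)$ is the same direction the paper takes, yielding $\Delta(x_0,y_0)=s(\phi)/\omega_0^2$. The only step left tacit is the symmetry $F_A=M_\omega$ (needed to write the cross term as $-2c\omega_0M_\omega$), which follows immediately from the self-adjointness of $\mathcal{L}_0$ applied to $\langle\mathcal{L}_0\eta,\beta\rangle=\langle\mathcal{L}_0\beta,\eta\rangle$ and is also derived in the paper.
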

\begin{proof}
First, in order to simplify the notation we define $\psi=\phi_{(\omega,A)}$ as the solution obtained in Theorem $\ref{existcurve}$. Deriving equation (\ref{ode-wave}) with respect to $\omega$ and $A$, we obtain, respectively
\begin{equation}\label{eq1}
\mathcal{M}\psi+\omega \mathcal{M}\eta+\psi+(\omega-1)\eta-\psi\eta=0
\end{equation}
and
\begin{equation}\label{eq2}
\omega \mathcal{M}\beta+(\omega-1)\beta-\psi\beta+1=0.
\end{equation}
Next, if we integrate equations (\ref{eq1}) and (\ref{eq2}) over $[0,L_0]$ one has
\begin{equation}\label{eq3}
\frac{1}{2}\frac{\partial}{\partial \omega}\int_{0}^{L_0}\psi^2dx=M(\psi)+(\omega-1)M_{\omega}(\psi)
\end{equation}
and
\begin{equation}\label{eq4}
\frac{1}{2}\frac{\partial}{\partial A}\int_{0}^{L_0}\psi^2dx=L_0+(\omega-1)M_{A}(\psi),
\end{equation}
where we used, in view of \eqref{A1A2}, that
$$
\int_{0}^{L_0}\mathcal{M}fdx=\widehat{\mathcal{M}f}(0)=\theta(0)\widehat{f}(0)=0.
$$

On the other hand, multiplying (\ref{eq1}) by $\psi$ and (\ref{ode-wave}) by $\eta$, adding the results and using (\ref{eq3}), we get
\begin{equation}\label{eq5}
M(\psi)+\left(\omega-1-\frac{A}{2}\right)M_{\omega}(\psi)=F(\psi)+\omega F_{\omega}(\psi)-\frac{1}{4} \frac{\partial}{\partial \omega}\int_{0}^{L_0}\psi^3dx.
\end{equation}
Similarly, multiplying (\ref{eq2}) by $\psi$ and (\ref{ode-wave}) by $\beta$, adding the results and using (\ref{eq4}), we conclude
\begin{equation}\label{eq6}
L_0+\left(\omega-1-\frac{A}{2}\right)M_{A}(\psi)=\frac{1}{2}M(\psi) + \omega F_A(\psi)-\frac{1}{4} \frac{\partial}{\partial A}\int_{0}^{L_0}\psi^3dx.
\end{equation}

Now, multiplying (\ref{eq1}) by $\psi$, integrating over $[0,L_0]$ and using (\ref{ode-wave}) one has
\begin{equation}\label{eq7}
2F(\psi)-\frac{1}{6}\frac{\partial}{\partial \omega}\int_{0}^{L_0}\psi^3dx - AM_{\omega}(\psi)=0.
\end{equation}
Similarly, multiplying (\ref{eq2}) by $\psi$, integrating over $[0,L_0]$ and using (\ref{ode-wave}), we get
\begin{equation}\label{eq8}
M(\psi)-\frac{1}{6}\frac{\partial}{\partial A}\int_{0}^{L_0}\psi^3dx - AM_{A}(\psi)=0.
\end{equation}

Thus, deriving (\ref{eq7}) with respect to $A$,  (\ref{eq8}) with respect to $\omega$ and adding the results, we obtain the equality
\begin{equation}\label{eq9}
F_A(\psi)=M_{\omega}(\psi).
\end{equation}
So, comparing the results in (\ref{eq7}), (\ref{eq8}) and (\ref{eq9}) with (\ref{eq5}) and (\ref{eq6}), we conclude that
\begin{equation}\label{eq10}
M(\psi)+2F(\psi)+\left(\omega-1-2A\right)M_{\omega}(\psi)=\omega F_{\omega}(\psi).
\end{equation}
and
\begin{equation}\label{eq11}
L_0+\left(\omega-1-2A\right)M_{A}(\psi)+M(\psi)=\omega M_{\omega}(\psi).
\end{equation}

Finally, collecting the results in (\ref{eq10}) and (\ref{eq11}), considering $\omega_0-1-2A_0\neq 0$ and evaluating the results at  $(\omega_0,A_0)$, we have 

\[
\begin{split}
\Delta(x_0,y_0)=&\left(\frac{x_0^2(\omega_0-1-2A_0)}{\omega_0}+2x_0y_0+\frac{y_0^2\omega_0}{\omega_0-1-2A_0}\right)M_{\omega}(\phi) \\
&+ \frac{x_0^2}{\omega_0}\left(M(\phi)+2F(\phi)\right)-\frac{y_0^2}{\omega_0-1-2A_0}\left(M(\phi)+L_0\right).
\end{split}
\]

By choosing $y_0\neq 0$, $x_0=\frac{-\omega_0 y_0}{\omega_0-1-2A_0}$ and using the fact
\begin{equation*}
(\omega_0-1)M(\phi)+\frac{1}{2}\int_{0}^{L_0}\phi \mathcal{M}\phi dx+A_0L_0=F(\phi),
\end{equation*}
we get
\[
\begin{split}
\Delta(x_0,y_0)&=\frac{y_0^2}{(\omega_0-1-2A_0)^2}\Big[({2\omega_0(\omega_0-1)+2A_0+1})M(\phi)+\omega_0\int_{0}^{L_0}\phi \mathcal{M}\phi dx   \\
&\quad\quad +({2A_0(\omega_0+1)-\omega_0+1})L_0\Big]\\
&=\frac{y_0^2}{(\omega_0-1-2A_0)^2}s(\phi).
\end{split}
\] 
 The proof is thus completed.
\end{proof}


\section{Applications}\label{applic}

In this section, we apply the arguments developed in Section \ref{OSPW} in order to obtain the orbital stability of periodic waves for some regularized dispersive models.

\subsection{Orbital stability for a fifth-order model.} Here, as an application of Corollary \ref{coro14}, we present the orbital stability of a periodic traveling-wave solution related to the following fifth-order model
\begin{equation}\label{rkawa}
u_t+u_x+uu_x+u_{xxxxt}=0.
\end{equation}
 Equation $(\ref{rkawa})$ can be seen as the regularized version of 
\begin{equation*}
u_t+uu_x-u_{xxxxx}=0,
\end{equation*}
which models wave propagation on a nonlinear transmission line (see \cite{Kano}).

To simplify the exposition, throughout this subsection we assume $L_0=2\pi$.
 Note that \eqref{rkawa} is of the form $(\ref{rDE})$  with $\mathcal{M}=\partial_x^4$. In particular, $\theta(\kappa)=\kappa^4$ and the energy space is $X=H^2_{per}([0,2\pi])$.

By looking for periodic traveling wave solutions having the form $u(x,t)=\phi(x-\omega_0 t)$, we get from $(\ref{rkawa})$ (after integration) that $\phi=\phi_{(\omega_0,A_0)}$ solves the nonlinear ordinary differential equation
\begin{equation}\label{edokawa}\omega_0\phi''''+(\omega_0-1)\phi-\frac{1}{2}\phi^2+A_0=0.
\end{equation}

 Equation $(\ref{edokawa})$ admits an explicit $2\pi$-periodic solution given by the ansatz (see \cite{parkes})
\begin{equation}\label{solkawa}
\begin{split}
\phi(x)&=a + b\left[\dn^2\left(\frac{K(k)}{\pi}x,k\right)-\frac{E(k)}{K(k)}\right]\\
&\quad+ d\left[\dn^4\left(\frac{K(k)}{\pi}x,k\right)-(2-k^2)\frac{2E(k)}{3K(k)}+\frac{1-k^2}{3}\right],\end{split}
\end{equation}
where
$$k_0=\frac{\sqrt{2}}{2}, \quad a=\frac{-28K(k_0)^4\omega_0+\pi^4(\omega_0-1)}{\pi^4}, \quad b=\frac{-1680\omega_0K(k_0)^4}{\pi^4}$$ 
and $$d=\frac{1680\omega_0K(k_0)^4}{\pi^4}$$
Also, $\dn$ represents the Jacobi elliptic function of  dnoidal type, $K=K(k)$ is the complete elliptic integral of the first kind, $E=E(k)$ is the complete elliptic integral of the second kind and both of them depend on the elliptic modulus $k\in(0,1)$ (see \cite{bird} for additional details). It is to be pointed out that $\omega_0$ is a free parameter and we shall assume that $\omega_0>0$ for the sake of completeness. Moreover, constant $A_0$ is a smooth function depending $\omega_0$ given by
\begin{equation}\label{valueA}
A_0=\frac{23184\omega_0^2K(k_0)^8-\pi^8(\omega_0-1)^2}{\pi^8}.
\end{equation}

Next, we will obtain the spectral properties related to the operator $\mathcal{L}_0=\omega_0\partial^4_x+(\omega_0-1)-\phi$ as required in $(H)$. To do so, we will  utilize the following result of \cite{AN}:

\begin{theorem}\label{ANtheorem}
	Suppose that $\phi$ is a positive even solution of (\ref{edokawa}) such that $\widehat{\phi}>0$ and $\frac{\partial^2}{{\partial x}^2}(\log{g(x)})<0$, $x\neq0$, where $g$ is a real function such that $g(n)=\widehat{\phi}(n)$, $n\geq0$. Then the operator $\mathcal{L}_0$ has only one negative eigenvalue which is simple and zero is a simple eigenvalue whose eigenfunction is $\phi'$.
\end{theorem}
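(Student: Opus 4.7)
The plan is to recast the eigenvalue problem for $\mathcal{L}_0$ as a $1$-eigenvalue problem for a compact self-adjoint operator whose Fourier matrix is totally positive of order $2$, and then to invoke Karlin's oscillation theorem from \cite{kar} to read off the needed spectral information. Differentiating (\ref{edokawa}) immediately gives $\mathcal{L}_0\phi'=0$, so the real issue is the multiplicity of $0$ and the count of negative eigenvalues.

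Assume (after a harmless spectral shift if needed) that $\omega_0>1$, so that $\mathcal{A}:=\omega_0\partial_x^4+(\omega_0-1)$ is positive definite with symbol $\omega_0 n^4+(\omega_0-1)>0$. Since $\phi>0$, the operator $\mathcal{T}:=\mathcal{A}^{-1/2}M_\phi\mathcal{A}^{-1/2}$ is compact and self-adjoint on $L^2_{per}([0,2\pi])$, and the substitution $v=\mathcal{A}^{1/2}u$ turns $\mathcal{L}_0 u=\lambda u$ (for $\lambda\leq 0$) into $\mathcal{T}_\lambda v=v$ with $\mathcal{T}_\lambda=(\mathcal{A}-\lambda)^{-1/2}M_\phi(\mathcal{A}-\lambda)^{-1/2}$. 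The identity $\langle\mathcal{L}_0u,u\rangle=\|v\|^2-\langle\mathcal{T}v,v\rangle$ combined with the standard min--max principle yields that the number of strictly negative eigenvalues of $\mathcal{L}_0$ equals the number of eigenvalues of $\mathcal{T}$ strictly greater than $1$, while $\dim\ker\mathcal{L}_0$ equals the multiplicity of $1$ in $\sigma(\mathcal{T})$.

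In the Fourier basis, $\mathcal{T}$ is represented by the symmetric matrix
$$K(n,m)=\frac{\widehat{\phi}(n-m)}{\sqrt{(\omega_0 n^4+\omega_0-1)(\omega_0 m^4+\omega_0-1)}},\qquad n,m\in\mathbb{Z}.$$
Because the diagonal weights are strictly positive, every $2\times 2$ minor of $K$ shares its sign with the corresponding minor of the Toeplitz matrix $\bigl(\widehat{\phi}(n-m)\bigr)$; thus strict $TP_2$ for $K$ reduces to strict log-concavity of $\{\widehat{\phi}(n)\}_{n\in\mathbb{Z}}$, that is, to $\widehat{\phi}(n)^2>\widehat{\phi}(n-1)\widehat{\phi}(n+1)$ plus $\widehat{\phi}>0$. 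Both are guaranteed by the hypotheses: positivity is explicit, and the strict concavity of $\log g$ on $\mathbb{R}\setminus\{0\}$ evaluated at three consecutive integers delivers the log-concavity for $n\neq 0$, while the evenness of $\widehat{\phi}$ together with the fact that $g$ attains its maximum at $0$ (a consequence of symmetry and concavity on either half-line) settles the case $n=0$. Karlin's oscillation theorem for compact symmetric strictly-$TP_2$ operators then guarantees that the top two eigenvalues of $\mathcal{T}$ are simple, the principal eigenfunction is everywhere positive, and the second eigenfunction has exactly one sign change.

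To match these abstract properties with the requirements on $\mathcal{L}_0$, I would take $v_1:=\mathcal{A}^{1/2}\phi'$, which by construction is a $1$-eigenvector of $\mathcal{T}$ and whose Fourier coefficients $in\sqrt{\omega_0 n^4+\omega_0-1}\,\widehat{\phi}(n)$ form (once the factor $i$ is factored out) a real odd sequence vanishing only at $n=0$, hence exhibiting exactly one sign change. By the oscillation theorem this forces $1$ to be the second-largest eigenvalue of $\mathcal{T}$ and to be simple, giving both $\dim\ker\mathcal{L}_0=1$ and the bound that $\mathcal{L}_0$ has at most one negative eigenvalue. Finally, testing with $v_0=\mathcal{A}^{1/2}\phi$ and using (\ref{edokawa}) yields $\langle\mathcal{L}_0\phi,\phi\rangle=-\tfrac12\int_0^{L_0}\phi^3\,dx-A_0\int_0^{L_0}\phi\,dx$, whose sign can be checked to be negative from the explicit formulas (\ref{solkawa})--(\ref{valueA}); thus the top eigenvalue of $\mathcal{T}$ strictly exceeds $1$ and $\mathcal{L}_0$ has \emph{exactly} one negative eigenvalue, completing the proof. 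The main obstacle is the clean translation between the continuous log-concavity of $g$ on $\mathbb{R}\setminus\{0\}$ and the $PF(2)$ property of the discrete sequence $\{\widehat{\phi}(n)\}$ (especially the boundary behaviour at $n=0$), together with the careful sign-change bookkeeping for the purely imaginary Fourier coefficients of $v_1$ so that Karlin's oscillation count applies unambiguously; once these are handled, the rest is a direct application of the total positivity machinery.
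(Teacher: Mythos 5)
Your plan is essentially the proof the paper relies on: the paper's own ``proof'' of Theorem \ref{ANtheorem} is simply a citation to Theorem 4.1 and Lemma 4.1 of \cite{AN} (see also \cite{albert1}), and those results are obtained by exactly the route you describe --- a Birman--Schwinger recasting of $\mathcal{L}_0u=\lambda u$ as a $1$-eigenvalue problem for a compact operator whose Fourier matrix is $\widehat{\phi}(n-m)$ times positive diagonal weights, strict $TP_2$ of that matrix from the $PF(2)$ property of $\{\widehat{\phi}(n)\}$, Karlin's oscillation theory \cite{kar} to get simplicity of the top two eigenvalues, and the identification of $\mathcal{A}^{1/2}\phi'$ (whose coefficient sequence $n\,w_n\widehat{\phi}(n)$ changes sign exactly once) as the second eigenvector. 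The one step whose justification would fail as written is the case $n=0$ of the discrete log-concavity: evenness of $\widehat{\phi}$ plus concavity of $\log g$ on each half-line does \emph{not} force $g$ to attain its maximum at the origin (take $g(x)=e^{-(|x|-1)^2}$, which is even, positive, and satisfies $(\log g)''<0$ for $x\neq0$ yet has $g(1)>g(0)$). What actually closes this case in Lemma 4.1 of \cite{AN} is the requirement that the even extension be $C^1$ at the origin, so that $g'(0)=0$; then $(\log g)'$ is odd, continuous and decreasing, hence $\log g$ is globally concave and the inequality $\widehat{\phi}(0)^2>\widehat{\phi}(1)\widehat{\phi}(-1)$ follows --- this is also why, in the application of Section \ref{applic}, the shift $\mu$ is taken large enough that a concave interpolant $p$ with $p(0)=a+\mu$ exists. (Your final test-function computation with $\langle\mathcal{L}_0\phi,\phi\rangle$ is redundant: strict $TP_2$ already gives $\lambda_0>\lambda_1=1$, so exactly one eigenvalue of $\mathcal{T}$ exceeds $1$.)
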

\begin{proof}
See Theorem 4.1 and Lemma 4.1 in \cite{AN}. See also \cite{albert1} for the continuous case.
\end{proof}

The application of Theorem \ref{ANtheorem} is not immediate. First of all note that $\phi$ is not positive (see Figure \ref{figurad2logg} (Left)). The idea to overcome this is to use an  auxiliary function defined by $\varrho:=\mu+\phi$, where  $\mu\in\R$ is a fixed arbitrary number such that $\varrho>0$. Note that $\varrho$ is a solution of the equation
$$\omega_0\varrho''''+(\omega_0-1+\mu)\varrho-\frac{1}{2}\varrho^2+\widetilde{A_0}=0$$
where $\widetilde{A_0}=A_0-\mu(\omega_0-1)-\frac{\mu^2}{2}$. Moreover, we can rewrite $\mathcal{L}_0$ as
$$
\mathcal{L}_0=\omega_0\partial^4_x+(\omega_0-1)-\phi=\omega_0\partial^4_x+(\omega_0-1+\mu)-\varrho.
$$
Now, we are going to determine the nonpositive spectrum of $\mathcal{L}_0$ according with equality above

In fact, by \cite{ayse} the solution $\phi$ in (\ref{solkawa}) has the Fourier expansion
\begin{eqnarray}
\phi(x) &=& a+\sum_{n=1}^{\infty}\gamma(n) n\mbox{csch}\left(\frac{n\pi K(k_0')}{K(k_0)}\right)\cos\left(\frac{2\pi n}{L_0}x\right), \nonumber \\
&=&a+\sum_{n=1}^{\infty}\gamma n\mbox{csch}\left(n\pi\right)\cos\left(nx\right), \nonumber
\end{eqnarray}
where $\gamma=\gamma(n)$ is defined by
\begin{equation}
\gamma=\frac{b\pi^2}{K(k_0)^2}+\frac{d\pi^2}{k_0^2K(k_0)^2}\left(\frac{4-2k_0^2}{3}+\frac{n^2\pi^2}{6K(k_0)}\right) =560\omega_0K(k_0)n^2+\frac{1680\omega_0K(k_0)^2}{\pi^2} \nonumber
\end{equation}
and we have used that $k'_0=\sqrt{1-k_0^2}=k_0$.
Therefore, the Fourier coefficients of $\phi$ are given by
\begin{equation*}
\widehat{\phi}(n)=\left \{
\begin{array}{cc}
a, & n=0 \\
\frac{\gamma}{2} n\mbox{csch}\left(n\pi\right), & n\neq0. \\
\end{array}
\right.
\end{equation*}

By considering $g(x):=\frac{\gamma(x)}{2} x\mbox{csch}\left(x\pi\right)$, $x\in\mathbb{R}$ and choosing $\mu$ large enough such that $\varrho>0$ and $\hat{\varrho}(0)=a+\mu>g(0)$, it is possible we redefine function $g$
by a differentiable function $p:\mathbb{R}\rightarrow\mathbb{R}$
such that $p(0)= a+\mu$ and
$p(x)=g(x)$ in $(-\infty,-1]\cup[1,+\infty)$ with  $\frac{\partial^2}{{\partial x}^2}(\log{p(x)})<0$ for $x\neq0$ (see \cite[page 1145]{AN}).
So, using  Theorem \ref{ANtheorem}, we obtain that $\mathcal{L}_0$ has only one negative eigenvalue which is simple and zero is a simple eigenvalue whose eigenfunction is $\phi'$. Therefore, we have that assumptions in \textit{(H)} hold. 

\begin{figure}
	\begin{center}		
		\includegraphics[scale=0.15]{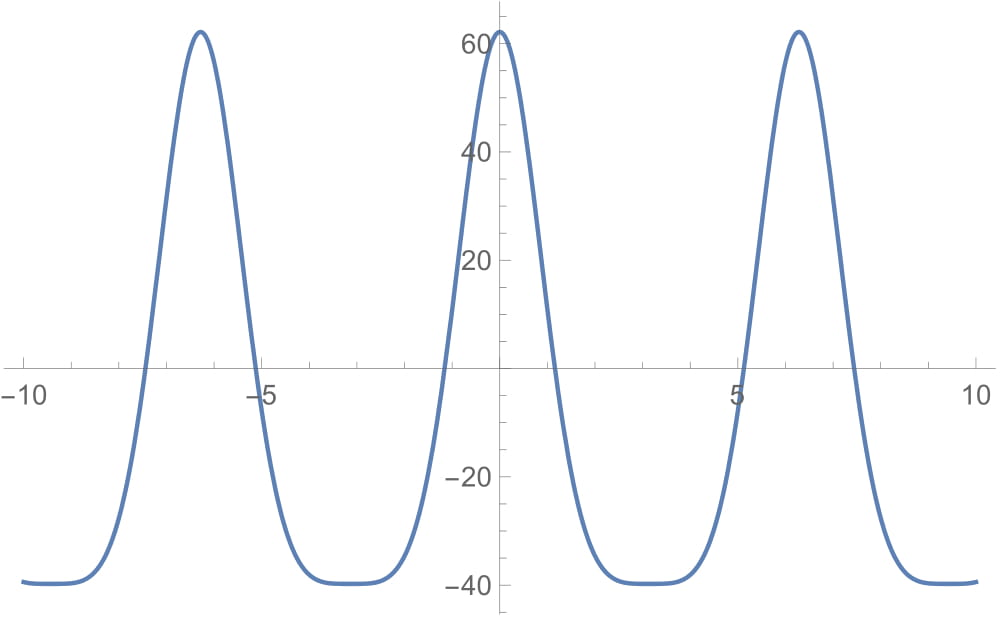} 
		\quad
		\includegraphics[scale=0.13]{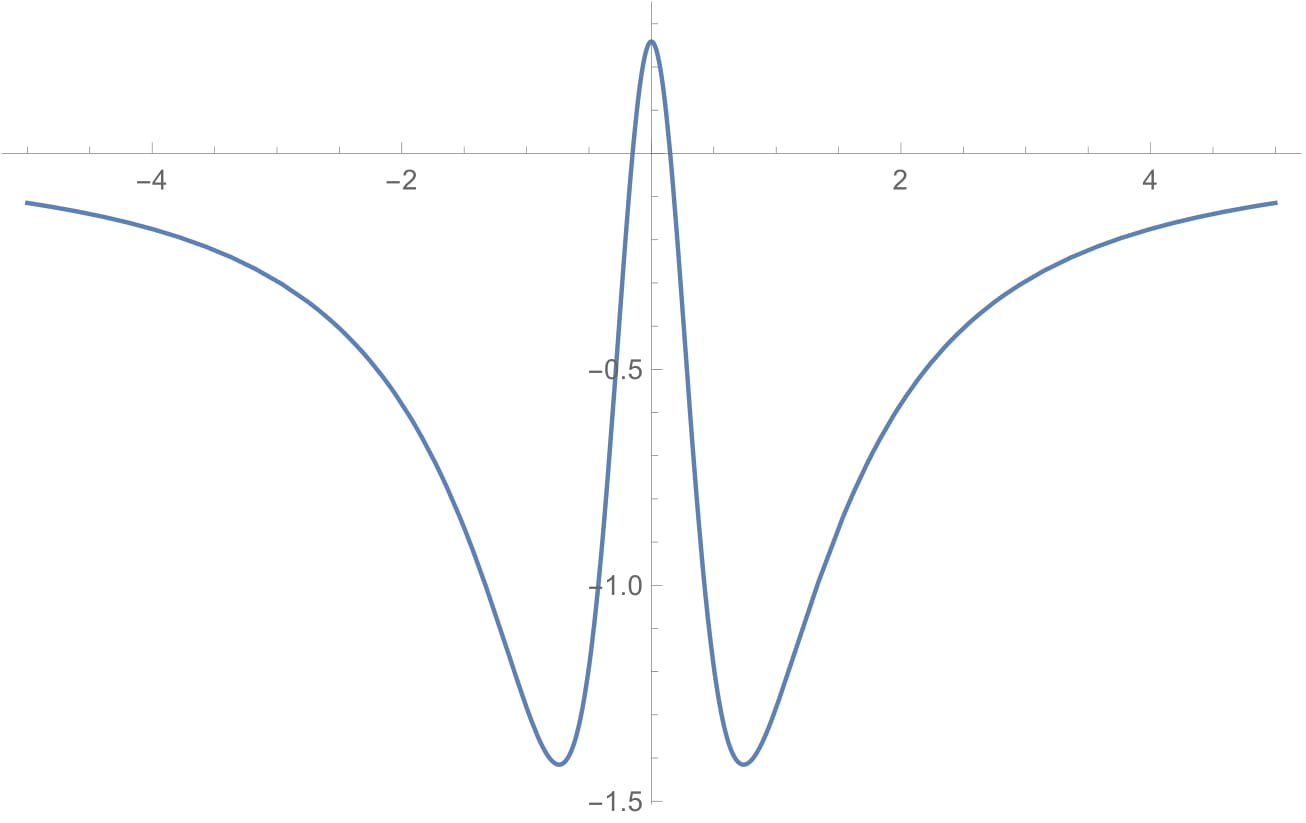}
		\caption{Left: Graph of $\phi$ for $\omega_0=2$. Right: Graph of $\frac{\partial^2}{\partial_{x}^2}\left(\log g(x)\right)$.}
		\label{figurad2logg}	
	\end{center}
\end{figure}

Finally, by using $(\ref{solkawa})$ we obtain after some straightforward but tedious calculations, that
\[
\begin{split}
s(\phi)&=\left(2\omega_0(\omega_0-1)+2A_0+1\right)aL_0+\omega_0\int_{0}^{L_0}(\phi'')^2dx+\left(2A_0(\omega_0+1)-\omega_0+1\right)L_0\\
&\approx 5277.03\omega_0^3, 
\end{split}
\]
from which we conclude that $s(\phi)>0$ for all $\omega_0>0$. It is also clear, from (\ref{valueA}), that
$\omega_0-1-2A_0\neq 0$ for all $\omega_0\in (0,c_0)\cup (c_0,+\infty)$, where $c_0\approx 0.0513569$ is the unique positive root of $\omega_0-1-2A_0$. Therefore, from Theorem \ref{coro14}, we conclude that $\phi$ is orbitally stable in $H_{per}^2([0,L_0])$ by the periodic flow of (\ref{rkawa}).





\subsection{Minimizers and orbital stability of periodic waves}

In this subsection, we present a simple way to prove the orbital stability of periodic waves for equation $(\ref{rDE})$ provided they minimize a convenient smooth functional with a constraint. In other words, we show that, in this case, the hypothesis $(H)$ and the fact $s(\phi)>0$ can be replaced by the simple assumption that $\phi$ is even and $\ker(\mathcal{L}_{0})=[\phi']$.

Let $L_0>0$ be fixed. For $\gamma>0$ define the set
$$Y_{\gamma}=\left\{u\in X;\ \int_0^{L_0}u^3=\gamma\right\}.$$
Our first goal is to find a minimizer of the constrained minimization problem
\begin{equation}
\label{infB}
m=\inf_{ u\in Y_{\gamma}}B(u),
\end{equation}
where, for $\omega_0>1$ fixed,
\[
\begin{split}
B(u)&=\frac{1}{2}\int_0^{L_0}\Big(u\mathcal{M}u+(\omega_0-1)\left(u^2+u\mathcal{M}u\right)\Big)dx\\
&=\frac{1}{2}\int_{0}^{L_0}\Big(\omega_0u\mathcal{M}u+(\omega_0-1)u^2\Big).
\end{split}
\]

\begin{lemma}\label{minlem}
For any $\gamma>0$, the minimization problem \eqref{infB} has at least one solution, that is, there exists $\phi\in Y_\gamma$ satisfying
$$
B(\phi)=\inf_{ u\in Y_{\gamma}}B(u).
$$
\end{lemma}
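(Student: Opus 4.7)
The plan is to apply the direct method of the calculus of variations. First I would establish that the functional $B$ is equivalent to the square of the $X$-norm. Using Parseval's identity in the periodic setting,
\[
B(u)=\frac{L_0}{2}\sum_{\kappa\in\mathbb{Z}}\bigl(\omega_0\theta(\kappa)+\omega_0-1\bigr)|\widehat{u}(\kappa)|^2,
\]
and since $\omega_0>1$ together with the two-sided bound \eqref{A1A2} gives $\omega_0\theta(\kappa)+\omega_0-1\asymp (1+|\kappa|^{m_1})$ for all $\kappa\in\mathbb{Z}$ (the constant term $\omega_0-1>0$ controls the mode $\kappa=0$), there exist constants $c_1,c_2>0$ with
\[
c_1\|u\|_X^2 \leq B(u) \leq c_2\|u\|_X^2, \qquad u\in X.
\]
In particular $B\ge 0$ on $Y_\gamma$, and since $Y_\gamma\neq\emptyset$ (rescale any smooth function $v$ with $\int_0^{L_0}v^3\neq 0$), the infimum $m$ is finite and nonnegative.

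Next I would pick a minimizing sequence $\{u_n\}\subset Y_\gamma$ with $B(u_n)\to m$. The equivalence above forces $\{u_n\}$ to be bounded in $X$, so after passing to a subsequence we may assume $u_n\rightharpoonup \phi$ weakly in $X$. The crucial step is to show $\phi\in Y_\gamma$: since $m_1>1/3$, one has $m_1/2>1/6=1/2-1/3$, and the one-dimensional periodic Sobolev embedding yields a compact inclusion $X=H_{per}^{m_1/2}([0,L_0])\hookrightarrow L_{per}^3([0,L_0])$. Hence $u_n\to\phi$ strongly in $L_{per}^3$, which implies
\[
\int_0^{L_0}\phi^3\,dx=\lim_{n\to\infty}\int_0^{L_0}u_n^3\,dx=\gamma,
\]
so indeed $\phi\in Y_\gamma$.

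Finally, $B$ is a nonnegative continuous quadratic form on $X$, hence weakly lower semicontinuous; therefore
\[
B(\phi)\leq\liminf_{n\to\infty}B(u_n)=m,
\]
while the constraint $\phi\in Y_\gamma$ gives $B(\phi)\ge m$. Combining these two inequalities yields $B(\phi)=m$, completing the proof.

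The only real obstacle is guaranteeing that the weak limit remains on the constraint manifold, i.e., that $\int\phi^3=\gamma$. This is exactly where the hypothesis $m_1>1/3$ from \eqref{A1A2} is essential, since it is the threshold that makes the embedding $H_{per}^{m_1/2}\hookrightarrow L_{per}^3$ compact and thus enables the passage to the limit in the cubic functional. Everything else, coercivity and weak lower semicontinuity, follows automatically from the quadratic structure of $B$ and the assumption $\omega_0>1$.
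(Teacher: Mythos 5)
Your proposal is correct and follows essentially the same route as the paper: coercivity of $B$ via the equivalence with $\|\cdot\|_X^2$ (from \eqref{A1A2} and $\omega_0>1$), a bounded minimizing sequence with a weakly convergent subsequence, the compact embedding $X\hookrightarrow L^3_{per}([0,L_0])$ for $m_1>1/3$ to preserve the constraint $\int_0^{L_0}\phi^3\,dx=\gamma$, and weak lower semicontinuity of the quadratic form to conclude. Your added detail on why the embedding is compact and the explicit Parseval computation only make the argument more transparent.
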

\begin{proof}
First of all note that, from $(\ref{A1A2})$,   $B$ is an equivalent norm in $X$, yielding $m\geq0$.
Let $\{u_n\}$ be a minimizing sequence for \eqref{infB}, that is, a sequence in $Y_\gamma$ satisfying 
$$\displaystyle B(u_n)\rightarrow\inf_{u\in Y_{\gamma}} B(u), \ \  \mbox{as} \ n\rightarrow \infty.$$ 

It is easy to check that
$$
\frac{(\omega_0-1)}{2}\|u_n\|_X\leq B(u_n)\leq \omega_0\|u_n\|_X, \ \ \mbox{for all} \ n\in\mathbb{N},
$$ 
implying that $\{u_n\}$ is bounded in $X$. Consequently, there exists 
$\phi\in X$  such that, up to a subsequence,
$$u_n\rightharpoonup \phi \ \ \mbox{weakly in} \ X,  \ \  \mbox{as} \ n\rightarrow \infty.$$

On other hand, using $m_1>1/3$ we get the energy space $X$ is compactly embedded in $L_{per}^{3}([0,L_0])$. Thus,
$$u_n\rightarrow \phi \ \ \mbox{in} \ L^3_{per}([0,L_0]),  \ \  \mbox{as} \ n\rightarrow \infty.$$
 Besides that, using the fact 
\begin{eqnarray}
\left|\int_{0}^{L_0}(u_n^3-\phi^3)dx\right|
&\leq&\int_{0}^{L_0}|u_n^3-\phi^3|dx \nonumber\\
&\leq&\|u_n-\phi\|_{L^3_{per}}^3+3\|u_n-\phi\|_{L^3_{per}}\|\phi\|_{L^3_{per}}\|u_n\|_{L^3_{per}}. \nonumber
\end{eqnarray}
we can say that $\int_0^{L_0}\phi^3 dx=\gamma$. 

 Moreover, thanks
to the weak lower semi-continuity of $B$, we have
\begin{equation*}
B(\phi)\leq\liminf_{n\rightarrow \infty} B(u_n)=m.
\end{equation*}
Therefore, $\phi$ satisfies (\ref{infB}).
\end{proof}

From Lemma \eqref{minlem} and  Lagrange's Multiplier Theorem, there exists of $C_1$ such that
$$\omega_0\mathcal{M}\phi+(\omega_0-1)\phi=C_1\phi^2.$$
We note that $\phi$ is nontrivial since $\gamma>0$. Furthermore, a simple scaling argument, gives us that $C_1$ can be chosen as $C_1=\frac{1}{2}$. Indeed, for $s\in\mathbb{R}$, 
\begin{eqnarray}
B(s\phi)&=&s^2B(\phi) \nonumber \\
&=&s^2\min_{u\in X}\left\{B(u);  \ \int_0^{L_0}u^3=\gamma\right\} \nonumber \\
&=&\min_{u\in X}\left\{B(su); \ \int_0^{L_0}u^3=\gamma\right\} \nonumber \\
&=&\min_{u\in X}\left\{B(u); \ \int_0^{L_0}u^3=s^3\gamma\right\}. \nonumber
\end{eqnarray}
Then, $\phi$ satisfies the equation
\begin{equation*}
\omega_0\mathcal{M}\phi+(\omega_0-1)\phi-\frac{1}{2}\phi^2=0.
\end{equation*}

\noindent In addition, we obtain that $\phi$ is smooth (by Proposition \ref{regularity}) and satisfies equation $(\ref{ode-wave})$ with $A_0=0$.

As before, let $\mathcal{L}_{0}=\omega_0\mathcal{M}+(\omega_0-1)-\phi$. Here, instead of assuming all assumption in $(H)$, we suppose the following:
 
 \vspace{0.4cm}
 
 \begin{enumerate}
 	\item[\textit{(H1)}] $\phi$ is even and $\ker(\mathcal{L}_{0})=[\phi']$.
 \end{enumerate}

 \vspace{0.4cm}
 
We note that, in view of (\ref{infB}), we have 
\begin{equation}\label{lpositive}
\mathcal{L}_{0}\big|_{\{R'(\phi)\}^{\bot}}\geq0
\end{equation}
where $R(u)=\int_0^{L_0}u^3dx$. So we have.

\begin{proposition}\label{Propn}
Let $\phi\in X$ be the local minimizer satisfying (\ref{infB}). Then $\mbox{n}\left(\mathcal{L}_0\right)=1$, where $\mbox{n}\left(\mathcal{L}_0\right)$ stands for the number of negative eigenvalues of $\mathcal{L}_0$ acting on $L^2_{per}([0,L_0])$.
\end{proposition}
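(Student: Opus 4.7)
The plan is to prove the two inequalities $\mathrm{n}(\mathcal{L}_0)\leq 1$ and $\mathrm{n}(\mathcal{L}_0)\geq 1$ separately, using (\ref{lpositive}) for the upper bound and a direct test with $\phi$ itself for the lower bound. Recall from the proof of Theorem \ref{existcurve} that $\mathcal{L}_0$ is self-adjoint with compact resolvent, so its spectrum consists of isolated eigenvalues of finite multiplicity; in particular, $\mathrm{n}(\mathcal{L}_0)$ is well-defined and equals the dimension of the spectral subspace associated with the negative part of $\sigma(\mathcal{L}_0)$.

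For the upper bound, I would argue by contradiction. Suppose $\mathrm{n}(\mathcal{L}_0)\geq 2$. Then there exist two linearly independent eigenfunctions $v_1,v_2$ of $\mathcal{L}_0$ with negative eigenvalues, and because these are orthogonal in $L^2_{per}$, any nonzero linear combination $v=\alpha v_1+\beta v_2$ satisfies $\langle\mathcal{L}_0 v,v\rangle<0$. The linear functional $w\mapsto\langle R'(\phi),w\rangle$ restricted to the two-dimensional subspace $\mathrm{span}(v_1,v_2)$ is either identically zero or has a one-dimensional kernel; either way, there exists $(\alpha,\beta)\neq(0,0)$ with $v\in\{R'(\phi)\}^\bot$. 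This contradicts (\ref{lpositive}), since $\langle\mathcal{L}_0 v,v\rangle<0$.

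For the lower bound, test $\mathcal{L}_0$ against $\phi$ itself. Using the Euler--Lagrange equation $\omega_0\mathcal{M}\phi+(\omega_0-1)\phi=\tfrac{1}{2}\phi^2$ obtained right before the statement, we get
\begin{equation*}
\mathcal{L}_0\phi=\omega_0\mathcal{M}\phi+(\omega_0-1)\phi-\phi^2=\tfrac{1}{2}\phi^2-\phi^2=-\tfrac{1}{2}\phi^2,
\end{equation*}
so $\langle\mathcal{L}_0\phi,\phi\rangle=-\tfrac{1}{2}\int_0^{L_0}\phi^3\,dx=-\gamma/2<0$. This shows that the quadratic form $\langle\mathcal{L}_0\cdot,\cdot\rangle$ is not nonnegative, hence $\mathrm{n}(\mathcal{L}_0)\geq 1$.

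Combining the two bounds yields $\mathrm{n}(\mathcal{L}_0)=1$. There is no real obstacle here: the only subtlety is making sure one can indeed find a direction in $\mathrm{span}(v_1,v_2)$ lying in $\{R'(\phi)\}^\bot$, which is a purely linear-algebraic observation in a two-dimensional subspace, and verifying $\gamma>0$ is used only to guarantee $\langle\mathcal{L}_0\phi,\phi\rangle<0$ strictly (which also confirms $\phi\not\in\{R'(\phi)\}^\bot$, consistent with $\langle R'(\phi),\phi\rangle=3\gamma\neq 0$).
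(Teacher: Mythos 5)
Your proof is correct and follows essentially the same route as the paper: the lower bound comes from testing the quadratic form against $\phi$ itself (giving $\langle\mathcal{L}_0\phi,\phi\rangle=-\tfrac{1}{2}\int_0^{L_0}\phi^3\,dx=-\gamma/2<0$), and the upper bound from the nonnegativity of $\mathcal{L}_0$ on the codimension-one subspace $\{R'(\phi)\}^{\bot}$ recorded in (\ref{lpositive}). The only cosmetic difference is that the paper invokes Courant's mini-max principle for the upper bound, whereas you spell out the underlying two-dimensional linear-algebra contradiction explicitly.
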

\begin{proof}
Since 
$$\langle\mathcal{L}_0\phi,\phi\rangle=-\frac{1}{2}\int_0^{L_0}\phi^3dx=-
\int_0^{L_0}\left(\omega_0\phi\mathcal{M}\phi+(\omega_0-1)\phi^2\right)dx<0.$$
It follows that $\mathcal{L}_0$ acting on $L^2_{per}([0,L_0])$ must have at least one negative eigenvalue. Moreover, using (\ref{lpositive}) we have, by Courant's mini-max principle, that $\mathcal{L}_0$ has at most one negative eigenvalues. Therefore, ${n}\left(\mathcal{L}_0\right)=1$.
\end{proof}


Since $\phi'$ is odd and the kernel of $\mathcal{L}_{0}$ is simple by $(H1)$, we can apply Theorem $\ref{existcurve}$ to obtain the existence of an open set $\mathcal{O}\subset(1,+\infty)\times (-\delta_0,\delta_0)$, $\delta_0>0$, and a smooth surface of $L_0-$periodic waves $\psi:=\phi_{(\omega,A)}$, $(\omega,A)\in\mathcal{O}$, which solves equation $(\ref{ode-wave})$. Proposition $\ref{prop1}$ can be used to conclude that the kernel of the linearized operator $\mathcal{L}=\omega\mathcal{M}+(\omega-1)-\psi$ is simple, generated by $\psi'$ and $n(\mathcal{L})=1$, for all pair $(\omega,A)\in\mathcal{O}\subset (1,+\infty)\times (-\delta_0,\delta_0)$.\\
\indent The next step is to calculate $s(\phi)$, where $\phi=\phi_{(\omega_0,0)}$, $\omega_0>1$. In fact, one has 
\begin{equation}\label{s123}s(\phi)=(2\omega_0(\omega_0-1)+1)M(\phi)+\omega_0\int_0^{L_0}\phi\mathcal{M}\phi dx+(1-\omega_0)L_0.
\end{equation}
We shall give a convenient expression for $M(\phi)$. In fact, from $(\ref{ode-wave})$ with $(\omega,A)=(\omega_0,0)$, we get

\begin{equation}\label{mean}
M(\phi)=\frac{1}{2(\omega_0-1)}\int_0^{L_0}\phi^2dx.
\end{equation}
On the other hand, multiplying equation $(\ref{ode-wave})$ by $\phi$ and integrating the result, one has

\begin{equation}\label{mean1}
\int_0^{L_0}\phi^2dx=\frac{1}{2(\omega_0-1)}\int_0^{L_0}\phi^3dx-\frac{\omega_0}{\omega_0-1}\int_0^{L_0}\phi\mathcal{M}\phi dx.
\end{equation}
Thus, from $(\ref{mean})$, $(\ref{mean1})$ and the fact that $\int_0^{L_0}\phi^3dx=\gamma$, we obtain
\begin{equation}\label{mean2}
M(\phi)=\frac{\gamma}{4(\omega_0-1)^2}-\frac{\omega_0}{2(\omega_0-1)^2}\int_0^{L_0}\phi\mathcal{M}\phi dx.
\end{equation}
Now, substituting the value of $M(\phi)$ in $(\ref{mean2})$ into $(\ref{s123})$, we deduce, after some calculations 
\begin{equation}\label{sfinal}\begin{array}{lllll}
s(\phi)&=&\displaystyle\frac{(2\omega_0(\omega_0-1)+1)\gamma}{4(\omega_0-1)^2}+\frac{\omega_0(1-2\omega_0)}{2(\omega_0-1)^2}\int_0^{L_0}\phi\mathcal{M}\phi dx+(1-\omega_0)L_0.
\end{array}\end{equation}

To estimate the middle term on the right-hand side of \eqref{sfinal}, observe, from \eqref{mean1}, that
$$
\omega_0\int_0^{L_0}\phi\mathcal{M}\phi dx<\frac{\gamma}{2},
$$
from which we deduce
\begin{equation}\label{sfinal1}
\frac{\omega_0(1-2\omega_0)}{2(\omega_0-1)^2}\int_0^{L_0}\phi\mathcal{M}\phi dx>\frac{\gamma(1-2\omega_0)}{4(\omega_0-1)^2}.
\end{equation}
By replacing \eqref{sfinal1} into \eqref{sfinal} we then infer
\[
\begin{split}
s(\phi)&>\displaystyle\frac{(2\omega_0(\omega_0-1)+1)\gamma}{4(\omega_0-1)^2} + \frac{\gamma(1-2\omega_0)}{4(\omega_0-1)^2} + (1-\omega_0)L_0\\
&= \frac{\gamma}{2}-(\omega_0-1)L_0.
\end{split}
\]

Hence, as an application of Theorem \ref{coro14} we just have proved the following.

\begin{theorem}\label{mintheo}
Let $L_0>0$ and $\omega_0>1$ be fixed. Choose $\gamma>0$ such that
$$
\gamma>2(\omega_0-1)L_0.
$$
Let $\phi\in Y_\gamma$ be a minimizer of problem \eqref{infB} according to Lemma \ref{minlem} and assume that $(H1)$ holds. Then $\phi$ is orbitally stable in $X$ by the periodic flow of \eqref{rDE}.
\end{theorem}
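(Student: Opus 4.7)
The plan is to verify the hypotheses of Theorem \ref{coro14} one by one, since the preparatory work in this subsection has already assembled most of the pieces. First I would check that assumption $(H)$ holds for the minimizer $\phi$. The wave belongs to $X$ and is even because the minimization is invariant under symmetric rearrangement (indeed, $(H1)$ already assumes $\phi$ is even and that $\ker(\mathcal{L}_0)=[\phi']$), so zero is a simple eigenvalue with eigenfunction $\phi'$. The fact that $\mathcal{L}_0$ has exactly one negative eigenvalue is supplied by Proposition \ref{Propn}, whose proof combines the inequality $\langle \mathcal{L}_0\phi,\phi\rangle<0$ (obtained by multiplying the profile equation by $\phi$ and using $\omega_0>1$) with the non-negativity of $\mathcal{L}_0$ on $\{R'(\phi)\}^\perp$ coming from the constrained minimization and Courant's mini-max principle. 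Thus assumption $(H)$ is in force.

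Second, I would verify the non-degeneracy condition $\omega_0-1-2A_0\neq0$. Since the Lagrange multiplier argument shows that $\phi$ solves \eqref{ode-wave} with $A_0=0$, this reduces to $\omega_0-1\neq 0$, which is immediate from the hypothesis $\omega_0>1$.

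Third, I would establish that $s(\phi)>0$ by plugging into the already derived expression \eqref{sfinal} and using the strict inequality \eqref{sfinal1}. Together these yield
\[
s(\phi)>\frac{\gamma}{2}-(\omega_0-1)L_0,
\]
so the assumption $\gamma>2(\omega_0-1)L_0$ gives $s(\phi)>0$. With $(H)$ holding, $\omega_0-1-2A_0\neq 0$, and $s(\phi)>0$ in hand, Theorem \ref{coro14} applies directly and yields orbital stability of $\phi$ in $X$ under the periodic flow of \eqref{rDE}.

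There is no serious obstacle left in the argument, since the real work was done earlier: the spectral count in Proposition \ref{Propn} (which is the only place where the minimization property is genuinely used), the construction of the smooth two-parameter surface and the Kato-type perturbation in Theorem \ref{existcurve} and Proposition \ref{prop1}, and the algebraic reduction of $s(\phi)$ to a positive quantity. The only subtlety worth underlining in the write-up is that $A_0=0$ is not an extra assumption but a consequence of the scaling argument following Lemma \ref{minlem} that fixes the Lagrange multiplier to $1/2$, so that the minimizer automatically produces a solution of \eqref{ode-wave} lying on the zero-$A$ slice of the surface.
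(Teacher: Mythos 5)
Your proposal is correct and follows essentially the same route as the paper: verify $(H)$ via $(H1)$ together with Proposition \ref{Propn}, note $A_0=0$ so that $\omega_0-1-2A_0=\omega_0-1\neq0$, and deduce $s(\phi)>\frac{\gamma}{2}-(\omega_0-1)L_0>0$ from \eqref{sfinal} and \eqref{sfinal1} before invoking Theorem \ref{coro14}. The paper presents exactly this computation in the lines preceding the theorem statement, so there is nothing to add.
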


\begin{obs}
The arguments above, assumption $(H1)$, Corollary $\ref{coro6789}$ and the smoothness of the involved functions are sufficient to deduce the orbital stability of the smooth surface of periodic waves $\phi_{(\omega,A)}$, $(\omega,A)\in\widetilde{\mathcal{O}}\subset\mathcal{O}\subset (1,+\infty)\times(-\delta_0,\delta_0)$, obtained from $\phi$.
\end{obs}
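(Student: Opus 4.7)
The plan is to combine the smooth dependence of the wave surface $\phi_{(\omega,A)}$ provided by Theorem~\ref{existcurve}, the spectral persistence of Proposition~\ref{prop1}, and the stability criterion of Theorem~\ref{coro14}, together with a continuity argument for the quantity $s(\phi_{(\omega,A)})$. First, I would apply Theorem~\ref{existcurve} at the base point $(\omega_0,0)$: assumption $(H1)$ provides that $\phi=\phi_{(\omega_0,0)}$ is even with $\ker(\mathcal{L}_0)=[\phi']$ simple, and Proposition~\ref{Propn} supplies $n(\mathcal{L}_0)=1$, so the full assumption $(H)$ holds at the base point. This produces a smooth surface $(\omega,A)\mapsto\phi_{(\omega,A)}$ on an open neighborhood $\mathcal{O}\subset(1,+\infty)\times(-\delta_0,\delta_0)$ of $(\omega_0,0)$ with values in $H^n_{per,e}([0,L_0])$ for every $n\in\mathbb{N}$. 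Proposition~\ref{prop1} then transfers the spectral property $(H)$ from $(\omega_0,0)$ to every $(\omega,A)\in\mathcal{O}$, shrinking $\mathcal{O}$ if necessary.

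Next, I would show that the nondegeneracy conditions $\omega-1-2A\neq 0$ and $s(\phi_{(\omega,A)})>0$ required by Theorem~\ref{coro14} persist on a possibly smaller neighborhood $\widetilde{\mathcal{O}}\subset\mathcal{O}$ of $(\omega_0,0)$. The first condition is immediate: at the base point one has $\omega_0-1-2\cdot 0=\omega_0-1>0$ since $\omega_0>1$, and so $\omega-1-2A$ remains nonzero on a neighborhood by continuity. For the second, recall from the derivation just before the remark that
\[
s(\phi_{(\omega_0,0)})>\frac{\gamma}{2}-(\omega_0-1)L_0>0
\]
under the standing hypothesis $\gamma>2(\omega_0-1)L_0$ of Theorem~\ref{mintheo}. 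Since the surface $(\omega,A)\mapsto\phi_{(\omega,A)}$ is smooth into $H^n_{per,e}$ for every $n$, each ingredient appearing in formula~\eqref{sphi123}---the mean $M(\phi_{(\omega,A)})$, the dispersive quadratic form $\int_0^{L_0}\phi_{(\omega,A)}\mathcal{M}\phi_{(\omega,A)}\,dx$, and the polynomial coefficients in $(\omega,A)$---depends continuously on $(\omega,A)$. Consequently $(\omega,A)\mapsto s(\phi_{(\omega,A)})$ is continuous, and the strict inequality at the base point extends to a neighborhood $\widetilde{\mathcal{O}}\subset\mathcal{O}$.

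With both nondegeneracy conditions in hand, I would apply Corollary~\ref{coro6789} pointwise at each $(\omega,A)\in\widetilde{\mathcal{O}}$ to obtain, via Proposition~\ref{propKpos}, an element $\Phi=x_0\eta+y_0\beta\in X$ (with coefficients depending on the point) such that $\langle\mathcal{L}_{(\omega,A)}\Phi,\varphi\rangle=0$ for every $\varphi\in\Upsilon_0$ and $\langle\mathcal{L}_{(\omega,A)}\Phi,\Phi\rangle<0$. Theorem~\ref{teoest}, equivalently Theorem~\ref{coro14}, then yields the orbital stability of each $\phi_{(\omega,A)}$ in $X$ by the periodic flow of~\eqref{rDE}, which is the conclusion of the remark. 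The only potential obstacle is the continuity of $(\omega,A)\mapsto s(\phi_{(\omega,A)})$, but this is really just a bookkeeping consequence of the smoothness of the wave surface supplied by Theorem~\ref{existcurve} together with the boundedness of $\mathcal{M}$ from $H^{m_1}_{per}$ to $L^2_{per}$; the argument itself is essentially the pointwise application of the already-established stability machinery.
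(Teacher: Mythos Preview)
Your proposal is correct and matches the paper's intended argument: the remark in the paper is stated without proof and simply points to the combination of $(H1)$, Proposition~\ref{Propn}, Theorem~\ref{existcurve}, Proposition~\ref{prop1}, Corollary~\ref{coro6789}, and the smoothness of the wave surface, and you have accurately spelled out how these pieces fit together via a continuity argument for $s(\phi_{(\omega,A)})$ and $\omega-1-2A$.
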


\indent As an application of the approach presented in this subsection, we are going to use Theorem \ref{mintheo} to get the orbital stability of periodic waves of the model in $(\ref{BO1})$. Our intention is to give a considerable simplification of the arguments in \cite{ASB}. In fact, to get assumption $(H)$, the authors have used the expansion in Fourier series of the explicit periodic wave which solves the equation $(\ref{ode-wave})$ when $\mathcal{M}=\mathcal{H}\partial_x$ and $A=0$. Moreover, it has been required in \cite{ASB} an explicit calculation of the derivative in terms of $\omega$ of the inner product $\langle\phi,\phi+\mathcal{H}\phi'\rangle$ to conclude the orbital stability.\\
\indent To simplify the notation, let us consider $L_0=4\pi$. The minimizer $\phi$ obtained in Lemma $\ref{minlem}$ solves the equation 
\begin{equation}
	\omega_0\mathcal{H}\phi'
	+(\omega_0-1)\phi-\frac{1}{2}\phi^2=0.
	\label{ode-BO}
\end{equation} 
Let $\omega_0>2$ be fixed. Using similar arguments as those in \cite{ABS}, we get an explicit solution as
\begin{equation}\label{explBO}
	\phi(x)=\omega_0\left(\frac{\sinh(\eta)}{\cosh(\eta)-\cos\left(\frac{ x }{2}\right)}\right),
\end{equation}
where $\tanh(\eta)=\frac{\omega_0}{(\omega_0-1)2}$.

\indent On the other hand, since $\omega_0>2$ is arbitrary, we deduce from $(\ref{explBO})$ that $\phi$ can be seen as a curve depending smoothly on $\omega\in(2,+\infty)$ and this fact is a cornestone to conclude assumption $(H1)$. In fact, clearly $\phi$ in $(\ref{explBO})$ is even. Let us consider $\widetilde{\mathcal{L}_0}=\frac{1}{\omega_0}\mathcal{L}_0$, thus
\begin{equation}
	\label{operaL1}
	\widetilde{\mathcal{L}_0}\left(\frac{1}{\omega_0}+\frac{\phi}{\omega_0}-\frac{\partial\phi}{\partial\omega}\Big|_{\omega=\omega_0}\right)=\frac{1}{\omega_0}\left(1-\frac{1}{\omega_0}\right)\neq0.\end{equation}
\indent  Now, since $\widetilde{\mathcal{L}_0}(\phi)=-\frac{1}{2\omega_0}\phi^2$ and $\widetilde{\mathcal{L}_0}\left(\frac{\partial\phi}{\partial\omega}\Big|_{\omega=\omega_0}\right)=-\frac{1}{\omega_0^2}\phi-\frac{1}{2\omega_0^2}\phi^2$, we have from $(\ref{operaL1})$ that $\{1,\phi,\phi^2\}\subset \rm{Range}(\mathcal{L}_0)$. Proposition 3.2 in \cite{hur} gives us that $\ker(\mathcal{L}_0)=[\phi']$ as required in $(H1)$.\\
\indent The Poincar\'e-Wirtinger inequality applied to $\int_{-2\pi}^{2\pi}\phi\mathcal{H}\phi'dx$ combined with the equation $(\ref{ode-BO})$ give us
\begin{equation}\label{poinc}
	\int_{-2\pi}^{2\pi}\phi^3dx\geq 2\omega_0(\omega_0-1)\int_{-2\pi}^{2\pi}\phi dx+4(\omega_0-1)^2\int_{-2\pi}^{2\pi}\phi dx-\frac{\omega_0}{4\pi}\left(\int_{-2\pi}^{2\pi}\phi dx\right)^2.
\end{equation}
Last term in $(\ref{poinc})$ can be handled employing H\"older inequality to get, again from equation $(\ref{ode-wave})$ in this particular case, that
\begin{equation}\label{poinc1}\begin{array}{lllll}
		\displaystyle\int_{-2\pi}^{2\pi}\phi^3dx&\geq& \displaystyle 2\omega_0(\omega_0-1)\int_{-2\pi}^{2\pi}\phi dx+4(\omega_0-1)^2\int_{-2\pi}^{2\pi}\phi dx-2\omega_0(\omega_0-1)\int_{-2\pi}^{2\pi}\phi dx.\\\\
		&=&\displaystyle4(\omega_0-1)^2\int_{-2\pi}^{2\pi}\phi dx\end{array}
\end{equation}

\indent Since $\int_{-2\pi}^{2\pi}\phi dx=4\pi\omega_0$, we obtain by $(\ref{poinc1})$
\begin{equation}\label{poinc12}
	\int_{-2\pi}^{2\pi}\phi^3dx\geq 16\pi\omega_0(\omega_0-1)^2.
\end{equation}
Finally, it is easy to see that $\omega_0\in(2,+\infty)$ implies   
$$\gamma\geq 16\pi\omega_0(\omega_0-1)^2>8\pi(\omega_0-1),$$
and, according with Theorem $\ref{mintheo}$ one has the orbital stability of $\phi$.

\begin{obs}
In the general fractional case, that is, $\mathcal{M}=\Lambda^{\alpha}$, $\alpha\in (1/3,2]$ and $\Lambda=\sqrt{-\partial_x^2}$, assumption $(H1)$ holds (see \cite{hur}) since it is assumed the existence of a smooth surface of periodic waves, $(\omega,A)\in\widetilde{\mathcal{O}}\mapsto\phi_{(\omega,A)}\in H_{per}^n([0,L_0])$, $n\in\mathbb{N}$, which solves equation $(\ref{ode-wave})$ having fixed period $L_0>0$. The existence of such smooth surface prevents the existence of "fold points", that is, values of $(\omega,A)\in\widetilde{\mathcal{O}}$ such that $\mathcal{L}_{(\omega,A)}g=0$, for some $g\in D(\mathcal{L}_{(\omega,A)})$. In fact, the existence of a smooth surface of periodic waves solving equation $(\ref{ode-wave})$ enables us to deduce the existence of $\beta\in D(\mathcal{L}_{(\omega,A)})$ such that $\mathcal{L}\beta=1$, and thus, after a straightforward calculation one has $\{1,\phi_{(\omega,A)},\phi_{(\omega,A)}^2\}\subset \rm{Range}(\mathcal{L}_{(\omega,A)})$. This property can be combined with Proposition 3.2 in \cite{hur} to get the non-degeneracy of $\ker(\mathcal{L}_{(\omega,A)})$ (see \cite{natali-le-peli} for details). By taking a  $A$ small enough and $\omega>1$, one sees that $\phi_{(\omega,A)}$ is orbitally stable in $H_{per}^{\alpha/2}([0,L_0])$ and, therefore, we can conclude that equation $(\ref{rDE})$, in the fractional case, always admits stable periodic waves. However, our approach diverges, in some sense the arguments in \cite{hur}, because, in this case, it was not necessary to calculate the signal of the Hessian matrix associated to the conserved quantities $F$ and $M$ in $(\ref{Fu})$ and $(\ref{Mu})$.

\end{obs}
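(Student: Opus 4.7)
My plan is to invoke Theorem \ref{coro14} directly on $\phi$, so the task reduces to verifying three conditions: that hypothesis $(H)$ holds for $\phi$, that $\omega_0-1-2A_0 \neq 0$, and that the quantity $s(\phi)$ defined in \eqref{sphi123} is strictly positive. All three will fall out of the variational characterization of $\phi$ together with $(H1)$.

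First I would pin down the Euler--Lagrange equation satisfied by the minimizer. Lagrange's multiplier theorem applied to \eqref{infB} supplies a constant $C_1$ with $\omega_0 \mathcal{M}\phi + (\omega_0-1)\phi = C_1 \phi^2$, and the scaling computation $B(s\phi) = s^2 B(\phi)$ together with the constraint $\int_0^{L_0} \phi^3 = \gamma$ forces $C_1 = 1/2$. Hence $\phi$ solves \eqref{ode-wave} with $A_0 = 0$. The kernel clause of $(H)$ is then immediate from $(H1)$, and Proposition \ref{Propn} supplies the remaining clause that $\mathcal{L}_0$ has exactly one negative eigenvalue (counted with multiplicity), so $(H)$ holds in full. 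The non-degeneracy condition $\omega_0 - 1 - 2A_0 = \omega_0 - 1 > 0$ is automatic since $\omega_0 > 1$ and $A_0 = 0$.

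The heart of the argument is the positivity of $s(\phi)$. With $A_0 = 0$ the formula \eqref{sphi123} collapses to
\[
s(\phi) = (2\omega_0(\omega_0-1)+1)\, M(\phi) + \omega_0 \int_0^{L_0} \phi\, \mathcal{M}\phi\, dx + (1-\omega_0) L_0.
\]
I would then derive two bookkeeping identities by testing \eqref{ode-wave} against $1$ and against $\phi$, giving
\[
M(\phi) = \frac{1}{2(\omega_0-1)} \int_0^{L_0} \phi^2\, dx, \qquad \int_0^{L_0} \phi^2\, dx = \frac{\gamma}{2(\omega_0-1)} - \frac{\omega_0}{\omega_0-1} \int_0^{L_0} \phi\, \mathcal{M}\phi\, dx,
\]
after using $\int_0^{L_0} \phi^3\, dx = \gamma$. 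Eliminating $\int \phi^2$ expresses $M(\phi)$ purely in terms of $\gamma$ and $\int \phi\, \mathcal{M}\phi$; substituting back yields a closed form for $s(\phi)$ involving only $\gamma$, $\omega_0$, $L_0$, and the single dispersive integral $\int \phi\, \mathcal{M}\phi$.

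To finish, I would estimate $\int \phi\, \mathcal{M}\phi$ from above via the crude but sharp-enough inequality $\omega_0 \int \phi\, \mathcal{M}\phi < \gamma/2$, which follows by rearranging the second identity above and invoking $\int \phi^2 > 0$. Inserting this bound gives $s(\phi) > \gamma/2 - (\omega_0 - 1) L_0$, strictly positive by the standing hypothesis $\gamma > 2(\omega_0 - 1) L_0$. Theorem \ref{coro14} then delivers the orbital stability. I expect the most delicate point to be the algebra in the previous paragraph: the closed-form expression for $s(\phi)$ carries several competing signs and a careless substitution could flip the direction of the final inequality, while the positivity of every denominator that appears hinges crucially on $\omega_0 > 1$.
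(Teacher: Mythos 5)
Your computation of $s(\phi)$ and the final bound $s(\phi) > \gamma/2 - (\omega_0-1)L_0 > 0$ faithfully reproduce the paper's proof of Theorem \ref{mintheo}, and that part of your argument is correct, including the delicate sign in passing from $\omega_0\int_0^{L_0}\phi\,\mathcal{M}\phi\,dx < \gamma/2$ to the lower bound on the middle term. The difficulty is that the remark you are asked to justify is not Theorem \ref{mintheo}: its essential new content is the claim that in the fractional case $\mathcal{M}=\Lambda^{\alpha}$ hypothesis $(H1)$ --- in particular the non-degeneracy $\ker(\mathcal{L}_{(\omega,A)})=[\phi_{(\omega,A)}']$ --- actually \emph{holds}, so that the stability machinery applies unconditionally and the equation ``always admits stable periodic waves.'' You write that the kernel clause of $(H)$ ``is immediate from $(H1)$,'' i.e.\ you take $(H1)$ as a standing hypothesis; this begs precisely the question the remark is answering.

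The paper's argument for $(H1)$, which you omit entirely, runs as follows: the smooth surface $(\omega,A)\mapsto\phi_{(\omega,A)}$ of solutions of \eqref{ode-wave} with fixed period (assumed in \cite{hur}) can be differentiated in $A$ and in $\omega$, producing $\beta$ and $\eta$ with $\mathcal{L}\beta=-1$ and $\mathcal{L}\eta=-(\mathcal{M}\phi+\phi)$; together with $\mathcal{L}\phi=-\tfrac12\phi^2-A$ and equation \eqref{ode-wave} itself (which expresses $\mathcal{M}\phi$ as a combination of $1$, $\phi$, $\phi^2$), this yields $\{1,\phi_{(\omega,A)},\phi_{(\omega,A)}^2\}\subset\mathrm{Range}(\mathcal{L}_{(\omega,A)})$, and Proposition 3.2 of \cite{hur} then forces the kernel to be spanned by $\phi'$ (no ``fold points''). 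Two smaller omissions: for the ``always admits'' conclusion you must note that $\gamma$ is a free parameter of the minimization \eqref{infB}, so $\gamma>2(\omega_0-1)L_0$ can be arranged for any prescribed $\omega_0>1$ and $L_0>0$; and the remark's assertion about $\phi_{(\omega,A)}$ for small $A\neq 0$ requires the perturbation argument of the preceding remark (Corollary \ref{coro6789} together with the smooth dependence on $(\omega,A)$), not only the $A_0=0$ computation you carry out.
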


\section*{Acknowledgements}

F. C. is supported by FAPESP/Brazil grant 2017/20760-0. F. N. is partially supported by CNPq/Brazil and Funda\c{c}\~ao Arauc\'aria/Brazil grants 304240/2018-4 and 002/2017.  A. P. is partially supported by CNPq/Brazil grants 402849/2016-7 and 303098/2016-3. The second author would like to express his gratitude to McMaster University for its hospitality and Dmitry E. Pelinovsky for fruitful comments regarding this work.

\vspace{1cm}

\end{document}